\newtheorem{theorem}{Theorem}[section]
\newtheorem{lemma}[theorem]{Lemma}
\newtheorem{rem}[theorem]{Remark}
\begin{document}
\title{On a polygon version of Wiegmann-Zabrodin formula }
\author{Alexey Kokotov}
\address{Department of Mathematics \& Statistics, Concordia University, 1455 De Maisonneuve Blvd. W. Montreal, QC  H3G 1M8, Canada, \url{https://orcid.org/0000-0003-1940-0306}}
\email{alexey.kokotov@concordia.ca}
\thanks{The research of the first author was supported by Max Planck Institute for Mathematics in Bonn}

\author{Dmitrii Korikov}
\address{Department of Mathematics \& Statistics, Concordia University, 1455 De Maisonneuve Blvd. W. Montreal, QC  H3G 1M8, Canada, \url{https://orcid.org/0000-0002-3212-5874}}
\email{dmitrii.v.korikov@gmail.com}
\thanks{The research of the second author was supported by Fonds de recherche du Qu\'ebec.}

\subjclass[2020]{Primary 58J52,35P99,30F10,30F45; Secondary 32G15,	32G08}
\keywords{Determinants of Laplacians, convex polygones, Hadamard variational formula}

\begin{abstract}
Let $P$ be a convex polygon in ${\mathbb C}$ and let $\Delta_{D, P}$ be the operator of the Dirichlet boundary value problem for the Lapalcian $\Delta=-4\partial_z\partial_{\bar z}$ in $P$.
We derive a variational formula for the logarithm of the $\zeta$-regularized determinant of $\Delta_{D, P}$ for arbitrary infinitesimal deformations of the polygon $P$ in the class of polygons (with the same number of vertices). For a simply connected domain with smooth boundary such a formula was recently discovered by Wiegmann and Zabrodin as a non obvious corollary of the Alvarez variational formula, for domains with corners this approach is unavailable (at least for those deformations that do not preserve the corner angles) and we have to develop another one.             
\end{abstract}

\maketitle

\section{Introduction} The $\zeta$-regularized determinants of the operators of the Dirichlet boundary value problems for the Laplacian in domains with smooth boundaries in ${\mathbb C}$ or,
more generally, in a two-dimensional Riemannian manifold have been appearing in numerous works both as a subject and a technical tool (see, e. g., \cite{Alvarez}, \cite{OPS}, \cite{OPS1}, \cite{Kim}, \cite{BFK} to mention a few).  The basic fundamental result here is the Alvarez comparison formula for the determinants of the Dirichlet boundary problem for the Laplacians corresponding to two conformally equivalent metric (as well as its infinitesimal version describing the variation of the determinant under a {\it smooth}  variation of the conformal factor). 

 The non-smooth case also attracted some attention. First, one has to mention the explicit formula for the determinant of the Dirichlet boundary value problem in a polygon proposed in 1994 by Aurell and Salomonson (\cite{AS}). Their methods are partially heuristic (although obviously ingenious); in particular, in Section 7 of (\cite{AS}) they use the tricks which are, seemingly, mathematically ungrounded (see, e. g., \cite{Mazzeo} for discussion of arising subtleties in a similar situation). However, we find no reason not to believe in their final result. The rigorous variational formula for the determinant we derive in the present work, unfortunately, neither confirms nor refutes their result:  the both formulas contain (in different ways) accessory parameters and to establish a connection between our result and that from \cite{AS} seems to be beyond our reach. 

Recently there were established several versions of the Alvarez  formula for {\it smooth} conformal variations of conical metrics on closed manifolds and curvilinear polygonal domains (\cite{Aldana}, \cite{PeltolaWang}, \cite{Krusell}); let us also point to an interesting formula for the variation of the determinant of the Dirichlet boundary value problem in a round sector under variation of the corner angle (\cite{Aldana}, Theorem 1.8), the latter result (but not the methods of \cite{Aldana}) might have some relations with the present work of ours.         

The goal of the present work is to establish a variational formula for the determinant of the Dirichlet boundary value problem for the Laplacian in a polygon under general deformations of the latter (as a polygon: say, an infinitesimal shift of a vertex). 

Let us emphasize that a general deformation does not preserve the angles of the polygon and, therefore, the conformal factor relating the initial operator and the perturbed one is {\it not smooth}, thus, one cannot apply here the versions of the Alvarez formula from \cite{Aldana}, \cite{PeltolaWang}, \cite{Krusell}. 
Moreover, had a needed {\it non-smooth} version of the Alvarez formula existed it would be problematic to use it: even for the case of a simply connected domain with smooth boundary it is very nontrivial to derive from the classical Alvarez formula the formula of the variation of the determinant of the Dirichlet boundary problem under smooth variations of the boundary: this was done only recently by Wiegmann and Zabrodin (see \cite{WZ}, formula (4.29)), we are discussing their result in a small appendix to the present paper.                 
 
To establish our result we use another approach: in a sense, we return to the starting point of the derivation of the classical Alvarez formula, i. e. a variational formula for an individual eigenvalue. In our case it is not the standard Feynman formula from the perturbation theory but an Hadamard's type geometric one. Then we use a machinery heavily based on a very specific property of a domain with straight edges: its Schottky double (the Riemann sphere) possesses a natural flat conical metric symmetric under antiholomorphic involution, this gives us a chance to apply the methods we used in \cite{KokKor1}, \cite{KokKor2}. 

Notice that the assumption of the convexity of the polygon is a bit redundant (and is made only for brevity sake):  our variational formula holds  for the shifts of the vertices of the polygon with corresponding angles less than $\pi$, the fixed vertices may have arbitrary angles. At the moment we do not know how essential this requirement is.  

Let us emphasize that our variational formula looks as a very natural counterpart to Wiegmann-Zabrodin's one (hence, the title of the paper) although the derivations of these two formulas are based on different ideas.  

\section{Hadamard formula and formulation of the main result}
\subsection*{Hadamard formula for polygon.} 
The following version of the Hadamard formula for the Dirichlet problem is the starting point of the present work.

Let $P$ be a convex polygon  in $\mathbb{C}$ with vertices $x_1,\dots,x_n$ of angles $\alpha_1,\dots,\alpha_n$. Consider a family of smooth deformations $P_t:=\phi_t(P)$ of the polygon $P$, where $(-t_0,t_0)\ni t\mapsto\phi_t$ is a one-parametric smooth family of  diffeomorphisms $\phi_t: \mathbb{C}\mapsto\mathbb{C}$ such that $\phi_0$ is the identity. Notice that all the deformations preserving $P_t$ in the class of polygons (with the same number of vertices) belong to this family: such a deformation can be locally realized as a (real) linear map (shift+anisotropic scaling) in $t,\Re x,\Im x$ near vertex/sides; the latter can be extended to a smooth family $t\mapsto\phi_t$ to the whole $\mathbb{C}$. 

The above deformations can be exemplified as follows. Let $a=x_i$ and $b=x_{i+1}$. Assume that the origin coincides with the intersection point of the lines containing the sides of $P$ adjacent to $[a,b]$. Let $\chi$ be a cut-off function supported in a small neighborhood of $[a,b]$ and equal to one in a smaller neighborhood of $[a,b]$. Then the rule 
$$\phi_t(x):=x(1-t\chi(x))$$ 
defines the smooth family of diffeomorphisms for small $t$ which performs parallel shift of $[a,b]$. More generally, the family
\begin{equation}
\label{shif plus rot}
x\mapsto \phi_t(x):=x-\Big[x c_1+\frac{e^{i\alpha_i}\Re(e^{i\alpha_i}x)}{e^{-2i\alpha_{i+1}}-e^{2i\alpha_i}}c_2 \Big]\chi(x)t
\end{equation}
makes parallel shift and rotation of the side $[a,b]$.

Introduce the shift vector $A_t$ by $A_t(\phi_t(x))=\partial_t\phi_t(x)$. Let 
$$\Delta^{(t)}_{D}:\,u\mapsto -4\partial_x\partial_{\overline{x}}u \qquad (u\in C_c^{\infty}(P_t))$$ 
be the Friedrichs extension of the Dirichlet Laplacian, let \[\lambda_1(t)\le\lambda_2(t)\le\dots\] be the sequence of its eigenvalues (counted with their multiplicities) and let \[u_1^{(t)},u_2^{(t)}\dots\] be the corresponding orthonormal basis of eigenfunctions. Under the above assumptions, the following version of the classical Hadamard formula
\begin{equation}
\label{Hadamard}
\delta\lambda_j=-\int\limits_{\partial P}(\partial_\nu u_j)^2\,(A\cdot\nu)dl,
\end{equation}
holds true for the variation $\delta\lambda_j=(d\lambda_j/dt)|_{t=0}$ of a simple eigenvalue $\lambda_j$ (see Theorem 4.5, \cite{COX}). Here $u_k=u_k^{(0)}$, $A\equiv A_0$ and $\nu$ is the outward normal. (If  the eigenvalue $\lambda_j=\lambda_j(0)=\lambda_{j+1}(0)=\dots=\lambda_{l}(0)$ is not simple, then the summation over all coinciding eigenvalues should be applied to the both sides of (\ref{Hadamard}), i.e.
$$\delta\Big(\sum_{k=j}^{l}\lambda_{l}\Big)=-\int\limits_{\partial P}\sum_{k=j}^{l}(\partial_\nu u_k)^2\,(A\cdot\nu)dl=-\int\limits_{\partial P}\Big(\underset{\lambda=\lambda_j}{\rm res}\partial_{\nu(x)}\partial_{\nu(y)}R^D_\lambda(x,y)\Big)\Big|_{y=x}(A(x)\cdot\nu(x))dl(x),$$
where $R^D_\lambda$ is the resolvent kernel of $\Delta_D$.
Note that the right-hand side does not depend on the choice of the orthonormal basis $u_{j}=u_j^{(0)},\dots,u_l=u_l^{(0)}$ in the eigenspace corresponding to $\lambda_j$. It should be noticed that, according to the result from \cite{Luc}, generically the spectrum of $\Delta_D$ is simple).

Without delving into the machinery of the theory of elliptic boundary value problems in non-smooth domains (see, e.g., \cite{NP}), we only note that the convexity condition for $P$ used in the justification of (\ref{Hadamard}) in \cite{COX} ensure that the eigenfunctions $u_j$ and their partial derivatives vanish at corners.

\subsection*{The main result: variational formula for the determinant of the Dirichlet boundary value problem in a polygon.}
By gluing two copies, $P\equiv P\times\{+\}$ and $P\times\{-\}$, of the polygon $P$ along the boundaries, one obtains a polyhedral surface $X$ endowed with  involution $x\mapsto x^\dag$ interchanging the points $(x,\pm)$. The Schwarz–Christoffel map $x: \mathbb{C}_+\to\Pi$, 
\begin{equation}
\label{Shwarz}
z\mapsto x(z):=C\int\limits_\cdot^z \prod_{k=1}^n(\zeta-z_k)^{\frac{\alpha_k}{\pi}-1}d\zeta.
\end{equation}
extends to the isometric biholomorphism from $\overline{\mathbb{C}}$ onto $X$ by symmetry $x(\overline{z})=x(z)^\dag$, where the (pullback) metric on $\overline{\mathbb{C}}$ is given by \[\mathfrak{T}(z)=\big|C\prod_{k=1}^n (z-z_k)^{\frac{\alpha_k}{\pi}-1}dz\big|^2\,.\]

Recall that the $\zeta$-function $s\mapsto\tilde{\zeta}_{\Delta^{(t)}_{D}-\mu}(s)$ of the operator $\Delta^{(t)}_{D}-\mu$ ($\mu\le 0$) as the meromorphic continuation of the sum $\sum_k(\lambda_k(t)-\mu)^{-s}$. It is well-known that $s\mapsto\tilde{\zeta}_{\Delta^{(t)}_{D}-\mu}(s)$ such continuation exists and is holomorphic outside the simple poles $s=1$, $s=1/2$. Introduce the spectral determinant of the operator $\Delta^{(t)}_{D}$ by
\[{\rm det}\Delta^{(t)}_{D}={\rm exp}\big(-\partial_s\zeta_{\Delta^{(t)}_{D}}(s)\big|_{s=0}\big).\]
The main result of the paper is the formula 
\begin{equation}
\label{Main formula}
\delta{\rm log}{\rm det}\Delta_{D}=\frac{1}{6\pi}\Im\mathscr{H}\int\limits_{\partial P}\{z,x\}(A\cdot\nu)\hat{\nu}(x)dx+\frac{\gamma-{\rm log}2}{12}\sum_{i=1}^n\Big(\frac{\pi}{\alpha_i}-\frac{\alpha_i}{\pi}\Big)\,\frac{\delta\alpha_i}{\alpha_i},
\end{equation}
for the variational derivative $\delta{\rm log}{\rm det}\Delta_{D}:=\frac{d}{dt}{\rm log}{\rm det}\Delta^{(t)}_{D}\big|_{t=0}$ under  deformations preserving the class of polygons. Here $\{\cdot,\cdot\}$ is the Schwarzian derivative, $\hat{\nu}(x)=-idx/dl(x)$ is the complexified outward normal, and $\mathscr{H}$ denotes the Hadamard regularization
\begin{equation}
\label{Hadamard reg def}
\mathscr{H}\int\limits_{\partial P}f(x)dx:=\lim_{\epsilon\to 0}\Bigg[\int\limits_{\partial P\backslash U(\epsilon)}f(x)dx-\sum_{k,j}c_{kj}\epsilon^{e_k}({\rm log}\epsilon)^{\tilde{e}_j}\Bigg],
\end{equation}
where $U(\epsilon)$ is the $\epsilon$-neighborhood of the set $\{x_1,\dots,x_{n}\}$ of vertices, the coefficients $c_{kj}\in\mathbb{C}$ are chosen in such a way that the limit in the right-hand side of (\ref{Hadamard reg def}) exist and is finite and the sum in the right-hand side of (\ref{Hadamard reg def}) contains only singular (growing as $\epsilon\to 0$) terms.

The rest of the paper is organized as follows. In Section \ref{sec bumping}, we give the formal derivation of (\ref{Main formula}) for smooth deformations vanishing near vertices. In Sections \ref{sec shift} and \ref{sec rot}, we derive formally the same formula for deformations that coincide near one vertex with parallel shift or a rotation of one of the sides and are identities far from this vertex. Since any infinitesimal deformation of $P$ preserving the class of polygons can be represented as a sum of such deformations, the above calculations lead to (\ref{Main formula}). The only step skipped in the proof of (\ref{Main formula}) in the above sections is the justification of interchanging the analytic continuation and the differentiation in $t$ in the formula
$$(s-1)\dots(s-1-Q)\delta\zeta_{\Delta}(s)=\frac{(Q+1)!}{2\pi i}\int\limits_\Gamma\delta\zeta_{\Delta-\mu}(2+Q)\mu^{1+Q-s}d\mu$$
with large $Q$. It is worth noting that such a justification is traditionally absent in the works where the above formula is used in deriving the determinants (e.g., in \cite{KokKorot}). For simplicity, we made the justification of this formula in Section \ref{justy} for the class of polygon-preserving deformations only. Finally, in a short Appendix we discuss the smooth version of (\ref{Main formula}) belonging to Wiegmann and Zabrodin.

\section{Bending the side far from vertices}
\label{sec bumping}
\subsection*{Variation of $\zeta_{\Delta_D-\mu}(2+Q)$.} Suppose that $(A\cdot\nu)$ vanishes near vertices and introduce the domain $\Omega=\Omega(t)$ with smooth boundary obtained by smoothing corners of $P$ and obeying ${\rm supp}(A\cdot\nu)\subset\partial\Omega\cap\partial P$. In view of (\ref{Hadamard}), we have
\begin{align}
\label{variation of -2 term}
\delta(\lambda_j-\mu)^{-(2+Q)}=\frac{-\delta\lambda_j}{(1+Q)!}[\partial^{2+Q}_\mu(\lambda_j-\mu)^{-1}]=\frac{1}{(1+Q)!}\int\limits_{\partial P}\partial_\mu^{2+Q}\Big[\frac{\partial_{\nu(x)} u_j(x)\partial_{\nu(y)} u_j(y)}{\lambda_j-\mu}\Big]_{y=x}\,(A\cdot\nu)dl(x)
\end{align}
where $\lambda_j\equiv\lambda_j(0)$. Making the summation over $j$, one obtains 
\begin{align}
\label{variation of zeta 2 mu}
\delta\zeta_{\Delta_D-\mu}(2+Q)=\frac{1}{(1+Q)!}\int\limits_{\partial P}\partial_{\nu(x)}\partial_{\nu(y)}[\partial_\mu^{2+Q} R^D_\mu(x,y)]\big|_{y=x} \, (A\cdot\nu)dl(x),
\end{align}
where $R^D_\mu$ is the resolvent kernel of $\Delta_D$. 

To justify the interchanging of summation in $j$ and the differentiation in $t$ in (\ref{variation of zeta 2 mu}) for sufficiently large $Q$, one needs to prove that the series $\sum_j \delta(\lambda_j-\mu)^{-(2+Q)}$ converge uniformly in $t$ for such $Q$. To this end, one applies the Weyl's law $\lambda_j=O(j)$ (uniformly in $t$) and the estimates
\begin{equation}
\label{estimates of eigenderivatives}
|\delta\lambda_j|\le C(A)\|\partial_\nu u_j\|^2_{H^{1/2}(\partial\Omega)}\le C(A,\Omega)\|u_j\|^2_{H^{2}(\Omega)}\le C(A,\Omega)[\|\Delta u_j\|^2_{L_2(\Omega)}+\|u_j\|^2_{L_2(\Omega)}]\le C(A,\Omega)\big(\lambda_j^2+1\big)
\end{equation}
following from (\ref{Hadamard}), the Sobolev trace theorem, and the local increasing smoothness theorems for solutions to elliptic boundary value problems. Here $C(A)$, $C(A,\Omega)$ are bounded unifromly in $t$. Thus, $$|\delta(\lambda_j-\mu)^{-(2+Q)}|=O(j^{-1-Q})$$ 
uniformly in $t$ and (\ref{variation of zeta 2 mu}) is valid at least for $Q=1,2,\dots$.

Let $\Delta$ be the Friedrichs extension of the Laplacian $-4\mathfrak{T}^{-1}\partial\overline{\partial}$ on $X$. In view of the symmetry of $X$, $R^D_\mu(x,y)$ is obtained by the anti-symmetrization of the resolvent kernel $R_\mu(x,y)=R_\mu(x^\dag,y^\dag)$ of $\Delta$,
\begin{equation}
\label{Dirichlet via polyhedra}
R^D_\mu(x,y)=R_\mu(x,y)-R_\mu(x,y^\dag), \qquad x,y\in X.
\end{equation}
Since both sides of (\ref{Main formula}) are additive in $A$ and invariant under translations and rotations of the complex plane, one can assume, without loss of generality, that $(A,\nu)$ is supported on the one side of $P$ which coincides with the segment $[x_1,x_2]$ of the real axis. Let $x$ be extended to the coordinates on $X\backslash P$ by the Schwarz reflection principle $x^\dag=\overline{x}$. Then the near-diagonal asymptotics
\begin{equation}
\label{resolvent near d}
R_\mu(x,y)=-\frac{{\rm log}r}{2\pi}\Big(1-\frac{\mu r^2}{4}\Big)+\tilde{R}_\mu(x,y), \qquad r:={\rm dist}(x,y)
\end{equation}
is valid and admits differentiation, where $\tilde{R}_\mu$ is twice continuously differentiable in each variable outside the vertices. Since the first term in the right-hand side of (\ref{resolvent near d}) is linear in $\mu$, we have \[\partial_{\nu(x)}\partial_{\nu(y)}[\partial_\mu^2 R^D_\mu(x,y)]\big|_{y=x}=\partial_\mu^2\psi_\mu(x),\] 
where
\begin{equation}
\label{Psi}
\psi_\mu(x):=\partial_{\nu(x)}\partial_{\nu(y)}[\tilde{R}_\mu(x,y)-\tilde{R}_\mu(x,\overline{y})]\big|_{y=x}.
\end{equation}
Note that $\psi_\mu$ is holomorphic in $\mu$ outside the spectrum of $\Delta_D$. Now (\ref{variation of zeta 2 mu}) can be rewritten as 
\begin{align}
\label{variation of zeta 2 mu short}
(1+Q)!\,\delta\zeta_{\Delta_D-\mu}(2+Q)=\int\limits_{\partial P}\partial_\mu^{2+Q}\psi_\mu(x)\,(A\cdot\nu)dl(x).
\end{align}
The function
\begin{equation}
\label{Psi big 1}
\Psi_\mu:=\int\limits_{\partial P}(A\cdot\nu)\,\psi_\mu(x)dl(x);
\end{equation}
is well-defined if the support of $(A\cdot\nu)$ is separated from vertices, and (\ref{variation of zeta 2 mu short}) takes the form 
$$(1+Q)!\,\delta\zeta_{\Delta_D-\mu}(2+Q)=\partial_\mu^{2+Q}\Psi_\mu.$$

\subsection*{Expression for $\psi_0$.} Recall the Verlinde-Verlinde formula (see \cite{VerVer,KokKor1})
\begin{equation}
\label{Verlinde}
G(x,y)=\frac{1}{2}(m(x)+m(y))-\frac{1}{4\pi}{\rm log}\big(|E(x,y)|^2\sqrt{\mathfrak{T}(x)\mathfrak{T}(x)}\big).
\end{equation}
for the Green function for the Friedrichs Laplacian $\Delta$ on $X$; here $E$ is the prime form on $X$, the Robin's mass $m$ is a smooth function outside vertices (see, e.g., \cite{KokKor1}), and the metric is given by $\mathfrak{T}(x)=\mathfrak{T}(x)=1$ in the coordinates $x$. In view of (\ref{Verlinde}) and the Laurent expansion 
\[R_\mu(x,y)=A^{-1}\mu^{-1}+G(x,y)+O(\mu)\]
for the resolvent, formulas (\ref{Psi}), (\ref{resolvent near d}) imply
\begin{align*}
\psi_0(x)=\partial_{\nu(x)}\partial_{\nu(y)}\Big[G(x,y)&-G(x,\overline{y})+\frac{1}{2\pi}{\rm log}\Big|\frac{x-y}{x-\overline{y}}\Big|\Big]_{y=x}=\\
=&\frac{1}{2\pi}\partial_{\nu(x)}\partial_{\nu(y)}{\rm log}\Big|\frac{x-y}{E(x,y)}\frac{E(x,\overline{y})}{x-\overline{y}}\Big|\Big|_{y=x}.
\end{align*}
The near-diagonal asymptotics of the prime form is given (in any holomorphic coordinates on $X$) by
\begin{equation}
\label{prime form asymp}
\frac{y-x}{E(x,y)}=1+\frac{1}{12}S_0(x)(y-x)^2+O((y-x)^3)
\end{equation}
(see formula (1.3), \cite{Fay}), where the holomorphic projective connection $S_0$ transforms as 
\begin{equation}
\label{proj connection}
S_0(x)=S_0(z)(\partial z/\partial x)^2+\{z,x\}.
\end{equation}
Note that the prime form is given by 
\[E(z,z')=\frac{z'-z}{\sqrt{dzdz'}}\]
in the (global) holomorphic coordinates $z\in\overline{\mathbb{C}}\equiv X$. Thus, (\ref{prime form asymp}) and (\ref{proj connection}) yield $S_0(z)=0$ and
\begin{equation}
\label{connection via shwarz}
S_0(x)=\{z,x\}.
\end{equation}
In view of (\ref{prime form asymp}) and (\ref{connection via shwarz}), we have 
\begin{align*}
{\rm log}\Big|\frac{x-y}{E(x,y)}\frac{E(x,\overline{y})}{x-\overline{y}}\Big|=\Re\Big[\frac{S_0(x)}{12}[(y-x)^2-(\overline{y}-x)^2]\Big]+O(|x-y|^3)
\end{align*}
and
\begin{equation}
\label{Psi 0}
\psi_0(x)=-\frac{\Re S_0(x)}{6\pi}=-\frac{\Re\{z,x\}}{6\pi}.
\end{equation}

\subsection*{Asymptotics of $\psi_\mu$ at infinity.} The resolvent kernel $R_\mu$ admits the asymptotics
\begin{equation}
\label{resolvent asymptotics}
R_\mu(x,y)=\frac{\chi(r)}{2\pi}K_0(\sqrt{-\mu}r)+\breve{R}_\mu(x,y),
\end{equation}
where $\chi$ is a smooth cut-off function equal to $1$ near the origin, the support of $\chi$ is sufficiently small, $K_0$ is the Macdonald function, and the remainder $\breve{R}_\mu$ and all its derivatives decay exponentially as $\Re\mu\to -\infty$ and uniformly in $x,y\in X$ separated from vertices. The latter follows from the fact that
$$(\Delta-\mu)\breve{R}_\mu(x,y)=\frac{1}{2\pi}[\Delta,\chi(r)]K_0(\sqrt{-\mu}r)$$
and all its derivatives decays exponentially as $\Re\mu\to -\infty$, the operator estimate $(\Delta-\mu)^{-1}=O(|\mu|^{-1})$ as $\Re\mu\to-\infty$, and the Sobolev embedding theorems.

Comparison of formulas (\ref{resolvent asymptotics}) and (\ref{resolvent near d}) yields
\begin{equation}
\label{breve tilde}
\tilde{R}_\mu(x,y)=\breve{R}_\mu(x,y)-(1/2\pi)+({\rm log}(\sqrt{-\mu}/2)+\gamma-1)(\mu r^2/4-1)(1/2\pi)
\end{equation}
for $y$ sufficiently close to $x$. In particular, formula (\ref{Psi}) can be rewritten as
\begin{equation}
\label{Psy asymp infinity}
\psi_\mu(x)=[1+{\rm log}2-(1/2){\rm log}(-\mu)-\gamma]\mu/2\pi+\breve{\psi}_\mu(x), \qquad \partial_\mu^2\psi_\mu(x)=-\frac{1}{4\pi\mu}+\partial_\mu^2\breve{\psi}_\mu(x)
\end{equation}
where $\breve{\psi}_\mu(x)$ and all its derivatives decay exponentially and uniformly in $x\in\partial\Omega\cap\partial P$ as $\Re\mu\to -\infty$. Formulas (\ref{Psi big 1}), (\ref{Psy asymp infinity}) and the expression 
\begin{equation}
\label{area var}
\delta{\bm|}P{\bm|}=\int\limits_{\partial P}(A\cdot\nu)dl(x)
\end{equation}
for the variation of the polygon area ${\bm|}P{\bm|}$ imply
\begin{equation}
\label{Psi big 2 as}
\Psi_\mu:=[1+{\rm log}2-(1/2){\rm log}(-\mu)-\gamma]\mu\frac{\delta{\bm|}P{\bm|}}{2\pi}+\breve{\Psi}_\mu,
\end{equation}
where the remainder $\breve{\Psi}_\mu$ and all its derivatives decay exponentially as $\Re\mu\to -\infty$.

\subsection*{Variation of $\zeta_{\Delta_D}(s)$: formal calculations.} Recall that $\zeta_{\Delta_D}(s)$ and $\zeta_{\Delta_D-\mu}(2+Q)$ are related via
\begin{equation}
\label{zeta s via zeta 2 mu}
(s-1)\dots(s-1-Q)\zeta_{\Delta}(s)=\frac{(Q+1)!}{2\pi i}\int\limits_\Gamma\zeta_{\Delta-\mu}(2+Q)\mu^{1+Q-s}d\mu;
\end{equation} 
here $\Gamma$ is the (counterclockwise) path enclosing the ray $(-\infty,0]$. Formally differentiating both sides of (\ref{zeta s via zeta 2 mu}) in $t$ and taking into account (\ref{variation of zeta 2 mu short}), one arrives at
\begin{align}
\label{variation of zeta s q}
(s-1)\dots(s-1-Q)\delta\zeta_{\Delta_D}(s)=\int\limits_{\Gamma}\frac{\mu^{1+Q-s}d\mu}{2\pi i}\partial_\mu^{2+Q}\Psi_\mu.
\end{align}
Due to (\ref{Psy asymp infinity}) one can integrate by parts in the right-hand side of (\ref{variation of zeta s q}) obtaining
\begin{align}
\label{variation of zeta s}
(s-1)\delta\zeta_{\Delta_D}(s)=\int\limits_{\Gamma}\frac{\mu^{1-s}d\mu}{2\pi i}\partial_\mu^{2}\Psi_\mu.
\end{align}

Now we make use of the following simple property of the inverse Mellin transform.
\begin{lemma}
\label{general lemma}
Let $F$ be a function holomorphic in some neighborhood of the negative semi-axis containing the curve $\Gamma$ admitting the asymptotics
\begin{equation}
\label{F asymp}
F(\mu)=\sum_{k=1}^K (F_k+\tilde{F_k}\mu{\rm log}(-\mu))\mu^{r_k}+\tilde{F}(\mu),
\end{equation}
as $\Re\mu\to -\infty$, where $r_k\in\mathbb{R}$, $F_k,\tilde{F_k}\in\mathbb{C}$, and the remainder obeys
\begin{equation}
\label{F tilde est}
|F(\mu)|=O(|\mu|^{\kappa}), \qquad |\mu\partial^k_\mu F(\mu)|=O(\mu^{\kappa})
\end{equation}
for some $\kappa<0$. Denote by $F(\infty)$ and $\tilde{F}(\infty)$ the constant term and the coefficient at ${\rm log}(-\mu)$ in {\rm(\ref{F asymp})}. Let $\widehat{F}$ be the analytic continuation of the integral
\begin{equation}
\label{mellin transfrom of F}
\widehat{F}(s):=\int\limits_{\Gamma}\frac{\mu^{1-s}d\mu}{2\pi i}\partial_\mu^2F(\mu)
\end{equation}
initially defined for sufficiently large $\Re s$. Then $\widehat{F}$ is holomorphic at $s=0$ and
\[\widehat{F}(0)=\tilde{F}(\infty), \qquad \partial_s\widehat{F}(0)=F(\infty)-\tilde{F}(\infty)-F(0).\]
In particular, the function $s\mapsto\eta(s):=\widehat{F}(s)/(s-1)$ obeys
\[\eta(0)=-\tilde{F}(\infty), \qquad -\partial_s\eta(0)=F(\infty)-F(0).\]
\end{lemma}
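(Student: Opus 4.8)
The plan is to evaluate the contour integral $\widehat F(s)$ directly by splitting $F$ into its asymptotic expansion plus remainder, handling each power $\mu^{r_k}$ (and the $\mu^{r_k+1}\log(-\mu)$ terms) separately, and extracting the constant term and derivative at $s=0$. First I would recall the standard Hankel-type integral over the keyhole contour $\Gamma$. For a pure power, $\frac{1}{2\pi i}\int_\Gamma \mu^{-s}\,d\mu$ is computed by collapsing $\Gamma$ onto the two sides of the cut $(-\infty,0]$, where $\mu=e^{\pm i\pi}|\mu|$, producing a factor $\frac{\sin(\pi s)}{\pi}\int_0^\infty t^{-s}\,dt$-type expression; more useful here is the known identity that the inverse Mellin transform of $\partial_\mu^2 F$ against $\mu^{1-s}$ picks out, via analytic continuation in $s$, precisely the coefficients in the large-$\mu$ expansion. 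The cleanest route is to integrate by parts twice in \eqref{mellin transfrom of F}, moving the two $\mu$-derivatives off $F$, so that
\[
\widehat F(s)=\frac{(1-s)(-s)}{2\pi i}\int_\Gamma \mu^{-1-s}F(\mu)\,d\mu,
\]
the boundary terms vanishing for $\Re s$ large because of \eqref{F tilde est}. Now $\frac{1}{2\pi i}\int_\Gamma \mu^{-1-s}F(\mu)\,d\mu$ is exactly the Mellin-Barnes representation whose analytic continuation in $s$ has simple poles at $s=r_k$ with residues governed by the coefficients $F_k$, and double poles where the $\log(-\mu)$ terms contribute.

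The key computation is then to identify how $F(\infty)=F_0$ (the coefficient of $\mu^0$, i.e. the constant term) and $\tilde F(\infty)$ (the coefficient of $\mu\log(-\mu)$) feed into the values at $s=0$. Writing $J(s):=\frac{1}{2\pi i}\int_\Gamma \mu^{-1-s}F(\mu)\,d\mu$, the remainder $\tilde F(\mu)=O(|\mu|^\kappa)$ with $\kappa<0$ contributes a piece of $J$ holomorphic for $\Re s>\kappa$, hence holomorphic at $s=0$, and its contribution to $\widehat F(s)=(1-s)(-s)J(s)$ vanishes to second order at $s=0$ except through the explicit prefactor. The constant term $F_0\mu^0$ in $F$ contributes $\frac{F_0}{2\pi i}\int_\Gamma \mu^{-1-s}\,d\mu$, which has a simple pole at $s=0$ with residue $-F_0$ (the standard keyhole evaluation gives $\frac{1}{2\pi i}\int_\Gamma \mu^{-1-s}\,d\mu=\frac{-1}{\Gamma(1+s)\Gamma(-s)}=\frac{\sin(\pi s)}{\pi}\cdot\frac{1}{(-s)}$, so near $s=0$ it behaves like $-1/s$). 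Multiplying by the prefactor $(1-s)(-s)$, this simple pole is cancelled and leaves a finite contribution $-F_0$ to $\partial_s\widehat F(0)$, matching the $-F(0)$ in the claimed formula. The $\mu\log(-\mu)$ term $\tilde F(\infty)\,\mu\log(-\mu)$ is the delicate one: its Mellin transform has a double pole at $s=0$ (the power $\mu^1$ shifts the pole location, and the $\log$ upgrades it to order two), and after multiplication by $(1-s)(-s)$ one obtains both a nonzero value $\widehat F(0)=\tilde F(\infty)$ and a contribution to the derivative.

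I expect the main obstacle to be the careful bookkeeping of these double poles and the $\log(-\mu)$ branch, specifically ensuring the signs and the $(1-s)(-s)$ prefactor combine correctly so that $\widehat F(0)=\tilde F(\infty)$ and $\partial_s\widehat F(0)=F(\infty)-\tilde F(\infty)-F(0)$ come out exactly. A clean way to control this is to treat $\frac{1}{2\pi i}\int_\Gamma \mu^{-1-s+r}(\log(-\mu))^p\,d\mu$ as a single building block, computed as a $p$-th $s$-derivative of the basic power integral, and then Taylor-expand everything at $s=0$ keeping terms through first order. Once the three building blocks (constant, $\mu\log(-\mu)$, and the $\Re s$-convergent remainder) are assembled and multiplied by $(1-s)(-s)$, reading off the value and first derivative at $s=0$ is mechanical. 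The final statement about $\eta(s)=\widehat F(s)/(s-1)$ then follows immediately: since $\eta(0)=\widehat F(0)/(0-1)=-\widehat F(0)=-\tilde F(\infty)$, and differentiating the quotient at $s=0$ gives $\partial_s\eta(0)=\partial_s\widehat F(0)\cdot(-1)-\widehat F(0)\cdot(-1)=-\partial_s\widehat F(0)+\tilde F(\infty)$, whence $-\partial_s\eta(0)=\partial_s\widehat F(0)-\tilde F(\infty)=F(\infty)-F(0)$, as asserted.
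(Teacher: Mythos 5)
Your starting move --- two integrations by parts over all of $\Gamma$ to get $\widehat F(s)=\frac{(1-s)(-s)}{2\pi i}\int_\Gamma \mu^{-1-s}F(\mu)\,d\mu$ for large $\Re s$ --- is legitimate, and your final algebraic reduction from $\widehat F$ to $\eta$ is correct. But the core of your plan fails, for a computational and a structural reason. Computationally, your evaluation of the basic building block is wrong: over the keyhole contour a pure power integrates to \emph{exactly zero}, $\frac{1}{2\pi i}\int_\Gamma \mu^{-1-s}\,d\mu\equiv 0$ in its half-plane of convergence (add the two rays $\mp e^{\pm i\pi s}\epsilon^{-s}/s$ to the circle contribution $2i\sin(\pi s)\epsilon^{-s}/s$, or use scale invariance $\mu\mapsto\lambda\mu$), so the constant term of the asymptotics contributes nothing by itself; note also that your two closed forms are mutually inconsistent, since $-1/(\Gamma(1+s)\Gamma(-s))=\sin(\pi s)/\pi$, which vanishes at $s=0$ rather than behaving like $-1/s$. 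This feeds a conceptual error: you conflate the asymptotic constant $F(\infty)=F_0$ (coefficient of $\mu^0$ at $\Re\mu\to-\infty$) with the value $F(0)$ at the origin. These are distinct numbers and enter the lemma with \emph{opposite} signs, $+F(\infty)-F(0)$; your scheme contains no mechanism producing $F(0)$ at all. In the paper's proof $F(0)$ arises from the small circle $|\mu|=\epsilon$ (the term $J_0$, whose $s$-derivative is evaluated by a Cauchy-type argument), a piece your decomposition never isolates.

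Structurally, the global splitting $F=\sum_k(\cdots)\mu^{r_k}+\tilde F$ cannot be integrated term by term over all of $\Gamma$: the expansion \eqref{F asymp} holds only as $\Re\mu\to-\infty$, and the building blocks $\mu^{r_k}$ (for $r_k\notin\mathbb{Z}$) and $\log(-\mu)$ have branch cuts along the \emph{positive} real axis, which the small circle of $\Gamma$ crosses; hence your blocks $\frac{1}{2\pi i}\int_\Gamma\mu^{-1-s+r}(\log(-\mu))^p\,d\mu$ are integrals of functions discontinuous on the contour, depend on the circle radius $\epsilon$ (e.g. the pure-log block equals $-\epsilon^{-s}/s$, not an $s$-derivative of the identically zero power integral), and only the recombination with the remainder piece is $\epsilon$-independent. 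A concrete test: for $F(\mu)=\log(1-\mu)$ one computes $\widehat F(s)=1-s$ by the residue at $\mu=1$, so $\widehat F(0)=1$, coming entirely from the pure $\log(-\mu)$ term --- which also shows that $\tilde F(\infty)$ must be read as the coefficient of $\log(-\mu)$ (the $r_k=-1$ log term), not of $\mu\log(-\mu)$ as you assume: with your reading the lemma fails already for $F(\mu)=\mu\log(1-\mu)$, where $\widehat F(s)=-s$ although the $\mu\log(-\mu)$ coefficient is $1$. The paper's proof is organized precisely to avoid these traps: it splits $\Gamma$ at $|\mu|=\epsilon$ into the rays ($J_\infty$, where one integration by parts is performed, the asymptotics is legitimately substituted, and the expansion $-\tilde F(\infty)/s^2-(F(\infty)+\pi i\tilde F(\infty))/s+O(1)$ emerges) and the circle ($J_0$, supplying $F(0)$), with the $\epsilon$-dependent substitution terms cancelling between the two. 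As sketched, your argument would not establish $\widehat F(0)=\tilde F(\infty)$ or $\partial_s\widehat F(0)=F(\infty)-\tilde F(\infty)-F(0)$, and the announced ``mechanical'' assembly would produce wrong values.
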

\begin{proof}
Suppose, for a while, that $\Re s$ is sufficiently large. We have
\begin{equation}
\label{general regularization 1}
\widehat{F}(s)=\pi^{-1}e^{-i\pi s}{\rm sin}(\pi s)J_\infty(s)+J_0(s),
\end{equation}
where
\[J_0(s):=\int\limits_{|\mu|=\epsilon}\mu^{1-s}\partial_\mu^2 F(\mu)\frac{d\mu}{2\pi i}, \qquad J_\infty(s):=\int\limits_{-\infty-i0}^{-\epsilon-i0}\mu^{1-s}\partial_\mu^2F(\mu) d\mu\]
and $\epsilon>0$ is sufficiently small. Note that $J_0$ is an entire function obeying $J_0(0)=0$ since the integration contour $|\mu|=\epsilon$ is compact. 

Integrating by parts, we obtain
\begin{align*}
J_\infty(s)=\mu^{-s}[\mu\partial_\mu+s-1]F(\mu)\Big|^{-\epsilon}_{-\infty}+s(s-1)\int\limits_{-\infty-i0}^{-\epsilon-i0}\frac{F(\mu)}{\mu^{s+1}}d\mu
\end{align*}
where the substitution at minus infinity vanishes for sufficiently large $\Re s$. Integrating both sides of (\ref{F asymp}) yields
\begin{align*}
\int\limits_{-\infty-i0}^{-\epsilon-i0}\frac{F(\mu)}{\mu^{s+1}}d\mu-\int\limits_{-\infty-i0}^{-\epsilon-i0}\frac{\tilde{F}(\mu)}{\mu^{s+1}}d\mu&=\\
=\sum_{k=1}^K \Big[\frac{F_k}{r_k-s}+&\frac{\epsilon[1+(s-r_k-1){\rm log}\epsilon]\tilde{F}_k}{(s-r_k-1)^2}\Big](-\epsilon-i0)^{r_k-s}.
\end{align*}
Here the second integral is well-defined and holomorphic in $s$ for $\Re s>\kappa$ due to (\ref{F tilde est}). Therefore the first integral admits the meromorphic continuation to the semi-plane $\Re s>\kappa$. The right-hand side admits the expansion
\[-\frac{\tilde{F}(\infty)}{s^2}-\frac{F(\infty)+\pi i\tilde{F}(\infty)}{s}+O(1)\]
near $s=0$. Therefore, $J_\infty$ admits meromorphic continuation to the neighborhood of the origin and 
\[J_\infty(s)=\mu^{-s}[\mu\partial_\mu+s-1]F(\mu)\Big|^{-\epsilon}+F(\infty)+\Big(\frac{1}{s}+\pi i-1\Big)\tilde{F}(\infty)+O(s)\]
near $s=0$. Now, (\ref{general regularization 1}) yields $\widehat{F}(0)=\tilde{F}(\infty)$.

Similarly, the integration by parts yields
\begin{align*}
-\partial_s J_0(s)=&\int\limits_{|\mu|=\epsilon}\frac{\mu^{1-s}\,{\rm log}\mu\,d\mu}{2\pi i}\partial_\mu^2 F(\mu)=\mu^{-s}\big[\mu\partial_\mu+s-1\big]F(\mu)\frac{{\rm log}\mu}{2\pi i}\Big|_{-\epsilon-i0}^{-\epsilon+i0}-\\
-&s\int\limits_{|\mu|=\epsilon}\mu^{-1-s}[(1-s){\rm log}\mu+1]\,F(\mu)\frac{d\mu}{2\pi i}+(1-s)\int\limits_{|\mu|=\epsilon}\mu^{-s-1}F(\mu)\frac{d\mu}{2\pi i}=\\
&=\mu^{-s}\big[\mu\partial_\mu+s-1\big]F(\mu)\Big|^{-\epsilon}+O(s)+F(0).
\end{align*}
Now, the differentiation of (\ref{general regularization 1}) and taking into account that ${\rm sin}(x)/x=1+O(x^2)$ leads to 
\begin{align*}
\partial_s\widehat{F}(s)=\Big[J_\infty(s)-\frac{\tilde{F}(\infty)}{s}\Big]_{s=0}+\partial_s J_0(0)-&i\pi\tilde{F}(\infty)+O(s)=\\
=&F(\infty)-\tilde{F}(\infty)-F(0)+O(s).
\end{align*}
\end{proof}
In view of (\ref{variation of zeta s}), the substitution of $F(\mu)=\Psi_\mu$, $\eta(s)=\delta\zeta_{\Delta_D}(s)$ into Lemma \ref{general lemma} leads to
\begin{equation}
\label{variation of logdeter}
\delta{\rm log}{\rm det}\Delta_{D}=0-\Psi_0.
\end{equation}
Substitution of (\ref{Psi big 1}) and (\ref{Psi 0}) into (\ref{variation of logdeter}) leads to (\ref{Main formula}).

\section{Parallel infinitesimal shifts of sides near vertices}
\label{sec shift}
\subsection*{Variation of $\zeta_{\Delta_D-\mu}(2+Q)$.} Now we consider the perturbation of $\partial P$ which coincides with a parallel shift of a side $[a:=x_i,x_{i+1}=:b]$ of the polygon near one of its ends (say, $a$) and vanishes near the other end. In other words, we assume that $(A\cdot\nu)$ is supported and is smooth on $[a,b]$, $(A\cdot\nu)=1$ in the segment $[a,q]$, and $(A\cdot\nu)=0$ near $b$. As in the previous paragraph, we can assume that $[a,b]\subset\mathbb{R}$. 

An example of the above variations is the family of diffeomorphisms
$$\phi_t:\,x\mapsto x+\upsilon t\chi(x-a) \qquad \Big(t\in(-t_0,t_0), \quad \upsilon:=\frac{a-x_{i-1}}{|a-x_{i-1}|}\Big),$$
where $\chi$ is a smooth positive cut-off function equal to 1 near the origin and to 0 outside a small neighborhood of the origin.

Since $(A\cdot\nu)$ does not vanish near one of the ends, one cannot apply here directly the calculations of the previous section to find the variation $\delta{\rm logdet}\Delta_D$: integral (\ref{Psi big 1}) does not converge (cf. (\ref{Psi 0}) for the case $\mu=0$). A more important problem in generalizing the calculations of Section \ref{sec bumping} is that now they require the near-diagonal asymptotics of the resolvent kernel to be uniform both in high negative $\mu$ and in small distance $r_0$ to the vertex. Although it is possible to obtain such an asymptotics from the exact Sommerfeld-type formulas for the heat kernel in a cone \cite{Carslaw,Dowker}, we present a simpler derivation based on the the deformation of the integration contour $[a,b]$ in (\ref{variation of zeta 2 mu}), (\ref{Psi big 1}) away from the vertex $a$.

Let us rewrite (\ref{Hadamard}) as
\begin{equation}
\label{to be shifted}
\delta\lambda_j=-\int\limits_{[q,b]}(\partial_\nu u_j)^2\,(A\cdot\nu)dl-\int\limits_{[a,q]}(\partial_\nu u_j)^2dl.
\end{equation}
Introduce the form
\begin{equation}
\label{closed shifting}
\omega_j=4(\partial_x u_j)^2dx-\lambda_j u_j^2d\overline{x}.
\end{equation} 
Note that $\omega_j$ is closed since 
\begin{align*}
d\omega_j=4\partial_{\overline{x}}(\partial_x u_j)^2\,d\overline{x}\wedge dx-\partial_{x}(\lambda_j u_j^2)\,dx\wedge d\overline{x}&=\\
=2\,[4\partial_{\overline{x}}\partial_x u_j+\lambda_j u_j]\partial_x u_j \,d\overline{x}\wedge dx&=-2(\Delta-\lambda_j)u_j\partial_{x}u_j\,d\overline{x}\wedge dx=0.
\end{align*}
At the same time, we have
\[\omega_j=-(\partial_{\nu(x)}u_j)^2 dl(x) \text{ on } (a,b)\]
due to the Dirichlet boundary conditions $u_j=\partial_l u_j=0$ and the equality $dx=dl(x)$ on $(a,b)$. Thus, the second integral in the right-hand side of (\ref{to be shifted}) can be rewritten as
\begin{equation}
\label{shift}
\int\limits_{[a,q]}(\partial_\nu u_j)^2dl=\frac{-1}{{\rm sin}\alpha}\Im\Big(e^{-i\alpha}\int\limits_{[a,q]}\omega_j\Big)=\frac{-1}{{\rm sin}\alpha}\Im\Big(e^{i\alpha}\Big[\int\limits_{[a,q']}\omega_j+\int\limits_{\mathcal{C}}\omega_j\Big]\Big)
\end{equation}
with any $\alpha\in(0,\pi)$; here $[a,q']$ is a segment of the side $[x_{i-1},x_i=a]$ coming out from the same vertex $a$ as $[a,b]$ and $\mathcal{C}$ is an arbitrary curve in $\overline{P}\backslash\{a\}$ that starts at $q'$ and ends at $q$. 

Due to Dirichlet conditions $u_j=0$ on $[x_{i-1},a]\supset[q',a]$, formula (\ref{closed shifting}) implies
\begin{equation}
\label{choice of alpha}
\begin{split}
e^{i\alpha}\omega_j=4e^{i\alpha}(\partial_x u_j)^2dx=[y=e^{-i\alpha_i}x]=4e^{i\alpha}(e^{-i\alpha_i}\partial_y u_j)^2 e^{i\alpha_i}dl(x)&=\\
=-4e^{i(\alpha-\alpha_i)}(\partial_{\nu(x)} u_j)^2dl(x) &\text{ on } [q',a],
\end{split}
\end{equation}
where $\alpha_i$ is the angle between $[a=x_i,x_{i+1}=b]$ and $[x_{i-1},x_i=a]$. In view of (\ref{choice of alpha}), the integral over $[a,q']$ makes no contribution into the right-hand side of (\ref{shift}) if $\alpha=\alpha_i$. Thus, we arrive at
\begin{equation}
\label{shift 1}
\delta\lambda_j=-\int\limits_{[q,b]}(\partial_\nu u_j)^2\,(A\cdot\nu)dl+\frac{1}{{\rm sin}\alpha_i}\Im\Big(e^{i\alpha_i}\int\limits_{\mathcal{C}}\omega_j\Big).
\end{equation}
Note that the integration in (\ref{shift 1}) is actually performed over the contour separated from vertices.

The substitution of (\ref{shift 1}) into the equation 
$$(1+Q)!\delta(\lambda_j-\mu)^{-(2+Q)}=-\delta\lambda_j\,\partial^{2+Q}_\mu(\lambda_j-\mu)^{-1}$$
yields
\begin{align*}
(1+Q)!\delta(\lambda_j-&\mu)^{-(2+Q)}-\int\limits_{[q,b]}\partial_\mu^{2+Q}\Big[\frac{\partial_{\nu(x)} u_j(x)\partial_{\nu(y)} u_j(y)}{\lambda_j-\mu}\Big]_{y=x}\,(A\cdot\nu)dl(x)=\\
=&\frac{-1}{{\rm sin}\alpha_i}\Im\Big(e^{i\alpha_i}\int\limits_{\mathcal{C}}\partial_{\mu}^{2+Q}\Big(\frac{2\omega_j}{\lambda_j-\mu}\Big)\Big)=\frac{-1}{{\rm sin}\alpha_i}\Im\Big(e^{i\alpha_i}\int\limits_{\mathcal{C}}\partial_{\mu}^{2+Q}\Big[\frac{4(\partial_x u_j)^2\,dx-\mu u_j^2\,d\overline{x}}{\lambda_j-\mu}\Big]\Big)
\end{align*}
(cf. (\ref{variation of -2 term})). Making summation over $j$, one arrivers at 
\begin{align}
\label{variation of zeta 2 mu shift}
(1+Q)!\delta\zeta_{\Delta_D-\mu}(2+Q)=\partial^{2+Q}_\mu\Psi_\mu-\frac{1}{{\rm sin}\alpha_i}\Im\Big[e^{i\alpha_i}\int\limits_{\mathcal{C}}\partial^{2+Q}_\mu(4\partial_x\partial_y R^D(x,y)\,dx-\mu R^D(x,y)\,d\overline{x})\Big|_{y=x}\Big],
\end{align}
where $\Psi_\mu=\Psi_\mu([q,b])$ is obtained by replacing the integration contour in (\ref{Psi big 1}) by $[q,b]$. The interchanging the differentiation in the parameter and the summation over $j$ for $Q=1,2,\dots$ is justified in the same way as in (\ref{variation of zeta 2 mu}); the only difference is that $\partial\Omega$ in estimate (\ref{estimates of eigenderivatives}) is replaced by $[q,b]\cap\partial\Omega$ or by $\mathcal{C}\subset\Omega$.

Introduce the functions
\begin{align}
\label{Phi 0}
\begin{split}
\phi^{+}_\mu(x):=4\Big[\partial_{x}\partial_y\Big(R^D_\mu(x,y)&+\frac{{\rm log}r}{2\pi}\Big(1-\frac{\mu r^2}{4}\Big)\Big)\Big]_{y=x}=\\
&=4\partial_{x}\partial_y\big[\tilde{R}_\mu(x,y)-R_{\mu}(x,y^\dag)\big]_{y=x},
\end{split}
\end{align}
\begin{align}
\label{Phi 1}
\phi^{-}_\mu(x):=\mu\Big[R^D(x,y)+\frac{{\rm log}r}{2\pi}\Big]_{y=x}=\mu[\tilde{R}_\mu(x,x)-R_{\mu}(x,x^\dag)]
\end{align}
(here $r:={\rm dist}(x,y)$ and formula (\ref{Dirichlet via polyhedra}) is used). Since $\partial_{x}\partial_y{\rm log}|x-\overline{y}|=0$, near-diagonal asymptotics (\ref{resolvent near d}) for $R_\mu(x,y^\dag)$ implies that $\phi^{+}_\mu$ and $\phi^{-}_\mu$ grow logarithmically near the sides of the polygon. Thus, the following integrals are well defined
\begin{align}
\label{Phi 01}
\Phi_\mu(a,q):=\frac{-1}{{\rm sin}\alpha_i}\Im\Big(e^{i\alpha_i}\int\limits_{\mathcal{C}}\big[\phi^{+}_\mu(x)dx-\phi^{-}_\mu(x)d\overline{x}\big]\Big).
\end{align}
Now, (\ref{variation of zeta 2 mu shift}) can be rewritten as
\begin{equation}
\label{variation of zeta 2 mu shift 1}
(1+Q)!\delta\zeta_{\Delta_D-\mu}(2+Q)=\partial^{2+Q}_\mu[\Psi_\mu([q,b])+\Phi_\mu(a,q)].
\end{equation}

\subsection*{Expression for $\Phi_0(a,q)$.} From (\ref{Phi 1}), it is clear that $\phi^{-}_0\equiv 0$. Since the prime-form $E(x,y)$ is holomorphic in both variables and the involution $y\mapsto y^\dag$ is anti-holomorphic, we have 
\[2\partial_{x}\partial_{y}{\rm log}|E(x,y^\dag)|=\partial_y[\partial_x E(x,y^\dag)/E(x,y^\dag)]=0.\]
Therefore, formulas (\ref{Phi 0}), (\ref{Dirichlet via polyhedra}), (\ref{Verlinde}), (\ref{prime form asymp}), (\ref{connection via shwarz}) and the obvious relations $\partial_{x}\partial_{y}m(x)=\partial_{x}\partial_{y}m(y)=0$ imply
\begin{align*}
\begin{split}
\phi^{+}_0(x)&=4\partial_{x}\partial_{y}\Big(G(x,y)+\frac{{\rm log}r}{2\pi}-G(x,y^\dag)\Big)\Big|_{x=y}=\\=\frac{1}{\pi}&\partial_{x}\partial_{y}\,{\rm log}\Big|\frac{(x-y)E(x,y^\dag)}{E(x,y)}\Big|^2\Big|_{x=y}=\frac{\partial_{x}\partial_{y}[S_0(x)(y-x)^2]}{12\pi}\Big|_{x=y}=-\frac{\{z,x\}}{6\pi}.
\end{split}
\end{align*}
Thus, we have
\begin{equation}
\label{Phi at 0}
\Phi_0(a,q):=\frac{1}{6\pi\,{\rm sin}\alpha_i}\Im\Big(e^{i\alpha_i}\int\limits_{\mathcal{C}}\{z,x\}dx\Big).
\end{equation}

\subsection*{Asymptotics of $\Phi_\mu(a,q)$ at infinity.} Denote 
\[r_1:={\rm dist}(x,[a,b]), \qquad r_2:={\rm dist}(x,[x_{i-1},a]).\] In view of formulas (\ref{resolvent asymptotics}), (\ref{Dirichlet via polyhedra}), (\ref{breve tilde}) and the equalities
\[\partial_x\partial_y r^2=0, \qquad \big(4\partial_x\partial_y-\mu\big) K_0(\sqrt{-\mu}|x-\overline{y}|)=0\]
the functions
\begin{align*}
\phi^{+}_\mu(x)&+\frac{\mu}{2\pi}[K_0(2\sqrt{-\mu}r_1)+e^{-2i\alpha_i}K_0(2\sqrt{-\mu}r_2)], \\
\phi^{-}_\mu(x)&+\frac{\mu}{2\pi}\Big({\rm log}(\sqrt{-\mu}/2)+\gamma+K_0(2\sqrt{-\mu}r_1)+K_0(2\sqrt{-\mu}r_2)\Big)
\end{align*}
decay exponentially (and uniformly in $x$ separated from vertices) as $\Re\mu\to -\infty$. 

Suppose that the part $L_1$ (resp., $L_2$) of the contour $\mathcal{C}$ close to $[a,b]$ (resp., to $[x_{i-1},a]$) is a straight segment orthogonal to $\partial P$. Then  
\begin{align*}
\begin{split}
\Phi_\mu(a,q)\simeq&-\Im\Big(-\frac{\mu({\rm log}(\sqrt{-\mu}/2)+\gamma)}{2\pi\,{\rm sin}\alpha_i}e^{i\alpha_i}\int_{\mathcal{C}}d\overline{x}+\\
+&\frac{i\mu }{\pi\,{\rm sin}\alpha_i}\Big[e^{i\alpha_i}\int_{L_1}K_0(2\sqrt{-\mu}r_1)d\Im x+\int_{L_2}K_0(2\sqrt{-\mu}r_2)d\Im(e^{-i\alpha_i}dx)\Big]\Big)\simeq\\
&\simeq-\frac{\mu({\rm log}(\sqrt{-\mu}/2)+\gamma)}{2\pi\,{\rm sin}\alpha_i}\Im\big[(\overline{q'-q})e^{i\alpha_i}\big]-\frac{(1+{\rm cos}\alpha_i)}{4\,{\rm sin}\alpha_i}\sqrt{-\mu},
\end{split}
\end{align*}
where $\simeq$ denotes the equality up to the terms exponentially decaying (together with all derivatives) as $\Re\mu\to -\infty$. Here we used the equality $\int_{0}^{+\infty}K_0(t)dt=\pi/2$. Taking into account the expression
\begin{equation}
\label{perimeter variation}
\frac{1+{\rm cos}\alpha_i}{{\rm sin}\alpha_i}=\delta{\bm|}\partial P{\bm|}
\end{equation}
for the variational derivative of the boundary length ${\bm|}\partial P{\bm|}$ of the polygon, one finally arrives at
\begin{equation}
\label{asymptotics of Phi}
\begin{split}
\Phi_\mu(a,q)&=-\frac{1}{2}\big[\delta{\bm|}\partial P{\bm|}\sqrt{-\mu}+\pi^{-1}\mu({\rm log}(\sqrt{-\mu}/2)+\gamma)(q-a)\big]+\breve{\Phi}_\mu(a,q),\\
\partial_\mu^2\Phi_\mu(a,q)&=\frac{q-a}{4\pi(-\mu)}+\frac{\delta{\bm|}\partial P{\bm|}}{16}(-\mu)^{-3/2}+\partial_\mu^2\breve{\Phi}_\mu(a,q)
\end{split}
\end{equation}
where $\breve{\Phi}_\mu(a,q)$ and all its derivatives in $\mu$ decays exponentially as $\Re\mu\to -\infty$. Formulas (\ref{asymptotics of Phi}) and (\ref{Psy asymp infinity}) yield
\begin{equation}
\label{asymptotics of Phi plus Psi}
\begin{split}
\Psi_\mu([q,b])+\Phi_\mu(a,q)=\Big[1+{\rm log}2-(1/2){\rm log}(-\mu)-\gamma\Big]\frac{\mu}{2\pi}\delta{\bm|}P{\bm|}-\frac{\sqrt{-\mu}}{2}\delta{\bm|}\partial P{\bm|}+\frac{q-a}{2\pi}\mu+\breve{\Psi}_\mu([q,b])+\breve{\Phi}_\mu(a,q),\\
\partial_\mu^2[\Psi_\mu([q,b])+\Phi_\mu(a,q)]=\frac{\delta{\bm|}P{\bm|}}{4\pi(-\mu)}+\frac{\delta{\bm|}\partial P{\bm|}}{16}(-\mu)^{-3/2}+\partial_\mu^2[\breve{\Psi}_\mu([q,b])+\breve{\Phi}_\mu(a,q)]
\end{split}
\end{equation}
(Note that, since $\delta{\bm|}\partial P{\bm|}=0$ for smooth perturbations of $P$ vanishing near vertices, no terms proportional to $\delta{\bm|}\partial P{\bm|}$ arouse in the previous section.)

\subsection*{Variation of $\zeta_{\Delta_D}(s)$: formal calculations.} Put $F(\mu)=\Psi_\mu([q,b])+\Phi_\mu(a,q)$. Formally differentiating (\ref{zeta s via zeta 2 mu}) in $t$, applying (\ref{variation of zeta 2 mu shift 1}), and then integrating by parts in $\mu$, one arrives at $(s-1)\delta\zeta_{\Delta_D}(s)=\widehat{F}(s)$, where $\widehat{F}$ is given by (\ref{mellin transfrom of F}). Applying Lemma \ref{general lemma}, one obtains $\delta{\rm log}{\rm det}\Delta_{D}=F(\infty)-F(0)$, where $F(\infty)=0$ due to (\ref{asymptotics of Phi plus Psi}). Taking into account (\ref{Psi big 1}), (\ref{Psi 0}), (\ref{Phi at 0}), one arrives at
\begin{equation}
\label{regularized answer}
\delta{\rm log}{\rm det}\Delta_{D}=\frac{1}{6\pi}\Im\Big(\int\limits_{[q,b]}\{z,x\}(A\cdot\nu)\hat{\nu}(x)dx-\frac{e^{i\alpha_i}}{{\rm sin}\alpha_i}\int\limits_{\mathcal{C}}\{z,x\}dx\Big).
\end{equation}
Note that the right-hand side of (\ref{regularized answer}) is finite and  is independent of the choice of the contour $\mathcal{C}\subset P$ (starting at arbitrary $q'\in[x_{i-1},x_i=a]$ and ending at arbitrary $q\in [a,b]$ sufficiently close to $a$). Therefore, one can assume that $\mathcal{C}=\mathcal{C}(\varepsilon)$ is the image $\mathcal{C}(\varepsilon)=x(\mathcal{A}(\varepsilon))$ of the arc $\mathcal{A}(\varepsilon)$ of radius $z(q)-z_i=:\varepsilon$ in the upper half-plane with center at the accessory parameter $z_i$ of $a$. 

Now, passing to the limit as $\varepsilon\to +0$ in (\ref{regularized answer}) is equivalent to the Hadamard regularization (\ref{Main formula}) of the integral $\int_{[a,b]}\{z,x\}(A\cdot\nu)\hat{\nu}(x)dx$ if the expansion of the integral $\int_{\mathcal{C}(\varepsilon)}\{z,x\}dx$ in (\ref{regularized answer}) in powers of $\varepsilon$ contains no constants terms. To show that this is true, we calculate the Schwarzian derivative of $z(x)$ (where $x(z)$ is given by (\ref{Shwarz})) and note that
\begin{align}
\label{Schwarzian derivatives}
\begin{split}
\frac{\partial x}{\partial z}=C&\prod_{k=1}^n(z-z_k)^{\frac{\alpha_k}{\pi}-1}\sim \Big[C\prod_{k\ne i}^n(z_i-z_k)^{\frac{\alpha_k}{\pi}-1}\Big](z-z_i)^{\frac{\alpha_i}{\pi}-1}(1+O(z-z_i)),\\
\{x,z\}=&\sum_{k=1}^n\frac{\pi-\alpha_k}{\pi(z-z_k)^2}-\frac{1}{2}\Big(\sum_{k=1}^n\frac{\pi-\alpha_k}{\pi(z-z_k)}\Big)^2\sim \Big[\frac{\pi^2-\alpha_i^2}{2\pi^2}\Big]\frac{1+O(z-z_i)}{(z-z_i)^{2}},\\
\{z,x\}dx&=-\frac{\partial z}{\partial x}\{x,z\}dz\sim\\
&\sim \Big[\frac{\alpha_i^2-\pi^2}{2C\pi^2}\prod_{k\ne i}^n(z_i-z_k)^{1-\frac{\alpha_k}{\pi}}\Big](z-z_i)^{-\frac{\alpha_i}{\pi}-1}dz(1+O(z-z_i)),\\
x\{z,x\}d&x\sim \frac{dz}{2}\Big(\frac{\alpha_i}{\pi}-\frac{\pi}{\alpha_i}\Big)\frac{1+O(z-z_i)}{z-z_i}
\end{split}
\end{align}
as $z\to z_i=z(a)$, whence
\[\int_{\mathcal{C}(\varepsilon)}\{z,x\}dx\sim {\rm const}\varepsilon^{-\frac{\alpha_i}{\pi}}+o(1)\]
due to the assumption $\alpha_i<\pi$. In view of this observation, formula (\ref{Main formula}) follows from (\ref{regularized answer}).

\section{Infinitesimal rotations of sides near vertices} 
\label{sec rot}
\subsection*{Variation of $\zeta_{\Delta_D-\mu}(2+Q)$.} Now, we aim to calculate the variation $\delta{\rm logdet}\Delta_D$ under the infinitesimal rotation $\delta\alpha_i=1$ of the side $[a,b]$ of $P$ near one of its end (say, $a$). Thus, we consider smooth perturbations of $\partial P$ satisfying
\begin{equation}
\label{odd perturbations}
{\rm supp}(A\cdot\nu)\subset [a,b), \qquad (A\cdot\nu)=x-a \text{ on } [a,q].
\end{equation}
Such deformation of $P$ can be implemented by the use of the family of diffeomorphisms
$$\phi_t:\,x\mapsto x-\upsilon t\,\Im\big((x-a)\overline{\nu}\big)\,\chi(x-a) \qquad \Big(t\in(-t_0,t_0), \quad \upsilon:=\frac{a-x_{i-1}}{|a-x_{i-1}|}\Big),$$
where $\chi$ is a smooth positive cut-off function equal to 1 near the origin and to 0 outside a small neighborhood of the origin.  

In this case, one cannot apply the trick of the Section \ref{sec shift} allowing to shift the contour of integration in (\ref{Hadamard}) away from the vertices since the corresponding differential form $x\omega_j$ is not closed. Thus, it remains to apply a more straightforward and technical method based on the appropriate regularizations of expressions in (\ref{variation of zeta 2 mu}), (\ref{variation of zeta 2 mu short}) by using the Sommerfeld-type formulas for the heat kernel in a cone \cite{Carslaw,Dowker}.

First, let us show that formula (\ref{variation of zeta 2 mu short}) with sufficiently large $Q$ is valid for such type of variations. Recall that the double cover $X$ of $P$ coincides with the cone $\mathbb{K}$ of angle $\beta=2\alpha_i$ near the vertex $a=x_i$. Let $(\varrho,\varphi)$ be polar coordinates of $x\in X$ near the vertex $a$ obeying $\varphi(x)=0$ for $x\in[a,2q]$. If $\mu>0$ and the function $u$ is bounded and obeys $(\Delta-\mu)u=0$ in the $2q$-neighborhood of $a$ in $X$, then 
\begin{equation}
\label{solutions in cone}
u(x)=\frac{1}{\beta \tilde{q}}\sum_{k\in\mathbb{Z}}\frac{J_{2\pi|k|/\beta}(\varrho\sqrt{\mu})}{J_{2\pi|k|/\beta}(\tilde{q}\sqrt{\mu})}e^{2\pi k i\varphi/\beta}(u,e^{2\pi k i\varphi/\beta})_{L_2(\{x\in X\,|\, \varrho(x)=\tilde{q}\})} \qquad (\varrho(x)\le \tilde{q}),
\end{equation}
where $J_\nu$ is the Bessel function. Indeed, each term in the sum in (\ref{solutions in cone}) obeys the equation $(\Delta-\mu)v=0$ while (\ref{solutions in cone}) becomes the Fourier expansion of the restriction $u$ on the arc $\{x\in X\,|\, \varrho(x)=\tilde{q}\}$ if $\varrho=\tilde{q}$. Note that $\tilde{q}\in(q,2q)$ can always be chosen in such a way that $\mu$ is far from the zeros of 
$$J_{2\pi|k|/\beta}(\tilde{q}\sqrt{\mu})\underset{\mu\to +\infty}{\simeq}\sqrt{\frac{2\pi}{x}}{\rm cos}\Big(\tilde{q}\sqrt{\mu}-\frac{\pi^2 k}{\beta}-\frac{\pi}{4}\Big)$$ 
(which are exactly the eigenvalues of the Dirichlet Laplacian on the finite cone $\{x\in X \ | \ r\le \tilde{q}\}$). Thus, the above formula, the Sobolev trace theorem yield
$$|\varrho^{-p}\partial^{p_1}_\varphi\partial^{p_2}_{\varrho}(u(x)-u(a)J_0(\varrho\sqrt{\mu}))|\le c\sqrt{|\mu|}r^{2\pi/\beta-p_1-p_2}\|u\|_{H^{p_1+p_2+2}(\{x\in X\,|\,\varrho\in[q,2q])\}} \qquad (\varrho(x)\le q).$$
Since $\Delta u=\mu u$, the increasing smoothness estimates $\|v\|_{H^2(\mathfrak{D})}\le c(\mathfrak{D})\|\Delta v\|_{L_2(\mathfrak{D})}$ for the solutions to Laplace equations in the domain $\mathfrak{D}:=\{x\in X\,|\,\varrho\in[q,2q])\}$ imply
$$|\varrho^{-p}\partial^{p_1}_\varphi\partial^{p_2}_{\varrho}(u(x)-u(a)J_0(\varrho\sqrt{\mu}))|\le c(|\mu|+1)^{(p_1+p_2+3)/2}r^{2\pi/\beta-p_1-p_2}\|u\|_{L_2(\{x\in X\,|\,\varrho\le 2q\}} \qquad (\varrho(x)\le q).$$
Note that the last estimate is uniform in $\mu$ and $t\in(-t_0,t_0)$ as long as the sides of $P$ are straight near $a$. Each eigenfucntion $u_j$ of $\Delta_D$ can be continued to the eigenfunction of the Laplacian $\Delta$ on $X$ by the rule $u_j(x^\dag)=-u_j(x)$. Thus, applying the last esitmate to $u=u_j$, $\mu=\lambda_j$, one arrives at 
\begin{equation}
\label{estimates near vertex NP}
|\partial_\nu u_j(x)|\le c(|\lambda_j|^{2}+1)r^{2\pi/\beta-1} \qquad (\varrho(x)\le q)
\end{equation}
(uniformly in $t\in(-t_0,t_0)$). It worth noting that estimate (\ref{estimates near vertex NP}) can be obtained just by applying Theorem 2.3.4 and Corollary 2.3.7 from \cite{NP} to the equation $\Delta_D u_j=\lambda_j u_j$.

Formulas (\ref{variation of -2 term}) and (\ref{estimates near vertex NP}) imply that $\delta(\lambda_j-\mu)^{-(2+Q)}=O(\lambda_j^{1-Q})$ and the series $\sum_j\delta(\lambda_j-\mu)^{-(2+Q)}$ converge uniformly in $t$ for $Q\ge 3$. This means that the series $\zeta_{\Delta_D-\mu}(2+Q)=\sum_j(\lambda_j-\mu)^{-(2+Q)}$ admit the term-wise differentiation in $t$ for $Q=3,4,\dots$. For such $Q$, formula (\ref{variation of zeta 2 mu short}) is valid and takes the form
\begin{align}
\label{rotation short}
(1+Q)!\delta\zeta_{\Delta_D-\mu}(2+Q)=\int\limits_a^q \partial_\mu^{2+Q}\psi_\mu(x) (x-a)dx+\partial_\mu^{2+Q}\Psi_\mu([q,b]),
\end{align}
where $\psi_\mu$ is defined in (\ref{Psi}) and $\Psi_\mu=\Psi_\mu([q,b])$ is obtained by replacing the integration contour in (\ref{Psi big 1}) by $[q,b]$.

\subsection*{Regilarization of the integral $\int_a^q \partial_\mu\psi_\mu(x) (x-a)dx$.} As shown below, one cannot change the integration and the differentiation in $\mu$ in the right-side of (\ref{rotation short}) since the resulting integral $\int_a^q \partial_\mu\psi_\mu(x) (x-a)dx$ diverges (even for finite $\mu$). To avoid this difficulty, one can add the counter-term of the form
$$\frac{\mathfrak{C}}{x-a}\partial_x K_0(\sqrt{-\mu}(x-a))$$
to the integrand, where $\mathfrak{C}$ is the constant to be determined. In other words, one rewrites (\ref{rotation short}) as
\begin{align}
\label{rotation counter terms}
(1+Q)!\delta\zeta_{\Delta_D-\mu}(2+Q)=\mathfrak{C}\partial_\mu^{1+Q}\Upsilon'_\mu(a,q)+\partial_\mu^{2+Q}\tilde{\Upsilon}_\mu(a,q)+\partial_\mu^{2+Q}\Psi_\mu([q,b]),
\end{align}
where
\begin{equation}
\label{the main part}
\Upsilon'_\mu(a,q):=\int\limits_a^{q}\partial_x\partial_\mu[K_0(\sqrt{-\mu}(x-a))]dx
\end{equation}
and
\begin{equation}
\label{the regularized remainder}
\tilde{\Upsilon}_\mu(a,q):=\int\limits_{a}^{q}\Big[\psi_\mu(x)(x-a)-\mathfrak{C}\partial_x K_0(\sqrt{-\mu}(x-a))\Big]dx.
\end{equation}
Our aim is to show that there is the unique choice of $\mathfrak{C}$ making integral (\ref{the regularized remainder}) convergent. To this end, we need the asymptotics of the resolvent kernel $R_\mu(x,y)$ for $x,y$ close to $a$.

\subsection*{Parametrix for the resolvent kernel $R_\mu(x,y)$ near the vertex.} Let $(\varrho,\varphi)$, $(\varrho',\varphi)$ be the polar coordinates of $x,y\in\mathbb{K}$, respectively. The heat kernel of the Laplacian $\Delta_{\mathbb{K}}$ in $\mathbb{K}$ (the integral kernel of $e^{-t\Delta_{\mathbb{K}}}$) is given by the Sommerfeld integral \cite{Carslaw,Dowker}
\begin{equation}
\label{heat kernel cover}
\mathcal{H}_t(x,y\,|\,\mathbb{K})=\frac{1}{8\pi i\beta t}\int\limits_{\mathscr{E}_+\cup(-\mathscr{E}_+)}\Bigg({\rm exp}\Big(-\frac{\mathfrak{R}(\varrho,\varrho',\vartheta)}{4t}\Big)-1\Bigg)\,\Xi(\vartheta,\varphi-\varphi') d\vartheta,
\end{equation}
where
\[\mathfrak{R}(\varrho,\varrho',\vartheta):=\varrho^2-2\varrho\varrho'{\rm cos}\vartheta+\varrho'^{2}, \qquad \Xi(\vartheta,\theta):={\rm cot}\Big(\pi\beta^{-1}(\vartheta+\theta)\Big)\]
and the contour $\mathscr{E}_+$ is contained in the semi-strip $\Re\vartheta\in[-\pi,\pi]$, $\Im\vartheta>0$ and runs from $\vartheta=\pi+i\infty$ to $\vartheta=-\pi+i\infty$ The contour $\mathscr{E}_+\cup(-\mathscr{E}_+)$ can be replaced by the union of small anticlockwise circles with centered at the poles of $\Xi$ lying in the small neighborhood of $\vartheta=0$ and the two contours $\mathscr{E}_L$, $\mathscr{E}_R$ running from $-\pi-i\infty$ to $-\pi+i\infty$ and from $\pi-i\infty$ to $\pi+i\infty$, respectively, and avoiding the poles of $\Xi$ and the zone $\Re {\rm sin}^2(\vartheta/2)\le 0$.

The resolvent kernel of $\Delta_{\mathbb{K}}$ is related to its heat kernel via the Laplace transform 
\begin{equation}
\label{resolvent via heat}
R_{\mu}(x,y\,|\,\mathbb{K})=\int\limits_0^\infty e^{\mu t}\mathcal{H}_t(x,y\,|\,\mathbb{K})\,dt.
\end{equation}
Let $\chi$ be a smooth cut-off function on $X$ equal to 1 in the $2q$-neighborhood of $a$ and to 0 outside $3q$-neighborhood of $a$. For $\varrho'\le q$, let us represent the resolvent kernel as 
$$R_\mu(x,y)=R_{\mu}(x,y\,|\,\mathbb{K})+\check{R}_\mu(x,y);$$
then the discrepancy $(\Delta-\mu)\check{R}_\mu(\cdot,y)$ is supported on the set $\{x\in X \,|\,\varrho\in (2q,3q)\}$ and it and all its partial derivatives with respect to $x,y$ decay as $O(\mu^{-\infty})$ as $\Re\mu\to-\infty$. Due to the standard estimate $(\Delta-\mu)^{-1}=O(\mu^{-1})$ of the resolvent, the smoothness increasing theorems for solutions to elliptic equations and the Sobolev embedding theorem one has
\begin{align*}
\|D^{l_1,l_2}_y\check{R}_\mu&(\cdot,y)\|_{C^{2l}(U;\mathbb{R})}\le c\|D^{l_1,l_2}_y\check{R}_\mu(\cdot,y)\|_{H^{2(l+1)}(U;\mathbb{R})}\le \\
\le &c(\|D_y\Delta^{l+1}\check{R}_\mu(\cdot,y)\|_{L_2(X;\mathbb{R})}+\|D^{l_1,l_2}_y\check{R}_\mu(\cdot,y)\|_{L_2(X;\mathbb{R})})=O(\mu^{-\infty}),
\end{align*}
uniformly in $y$, where $l,l_1,l_2=0,1,\dots$, $U$ is an arbitrary domain in $X$ whose closure does not contain vertices and $D^{l_1,l_2}_y=\partial_y^{l_1}\partial_{\overline{y}}^{l_2}$. Since $\check{R}_\mu(\cdot,y)$ obeys $(\Delta-\mu)\check{R}_\mu(\cdot,y)=0$ near the vertex $a$, formula (\ref{solutions in cone}) yields
$$\check{R}_\mu(x,y)=\frac{1}{\beta q}\sum_{k\in\mathbb{Z}}\frac{J_{2\pi|k|/\beta}(\varrho\sqrt{\mu})}{J_{2\pi|k|/\beta}(q\sqrt{\mu})}e^{2\pi k i\varphi/\beta}(\check{R}_\mu(\cdot,y),e^{2\pi k i\varphi/\beta})_{L_2(\{x\in X\,|\,\varrho=q\})}.$$
In view of this expansion, the above estimate implies that 
\begin{equation}
\label{parametrix remainder esti}
\check{R}_\mu(x,y)=O(\mu^{-\infty}), \qquad \varrho^{l_1+l_2-2\pi/\beta}D^{l_1,l_2}_x D^{l_3,l_4}_y\check{R}_\mu(x,y)=O(\mu^{-\infty}) \qquad (l_1+l_2>0)
\end{equation}
where $O(\mu^{-\infty})$ denotes the remainder uniformly bounded for all $x,y,\mu$ obeying $\varrho,\varrho'\le q$ and $\Re\mu<\mu_0$, $|\mu|>\mu_0$ ($\mu_0>0$ can be arbitrarily small) and decaying faster than any power of $\mu$ and uniformly in $x,y$ obeying $\varrho,\varrho'\le q$ as $\mu\to -\infty$. 

\subsection*{Completing the regilarization of the integral $\int_a^q \partial_\mu\psi_\mu(x) (x-a)dx$.} Taking into account formulas (\ref{parametrix remainder esti}), (\ref{breve tilde}), (\ref{Psi}),
one rewrites (\ref{the regularized remainder}) as
\begin{align*}
\mathfrak{J}^l_\mu:=&\partial_\mu^l\Big[\tilde{\Upsilon}_\mu(a,q)-[1+{\rm log}2-(1/2){\rm log}(-\mu)-\gamma]\frac{\mu}{2\pi}\cdot\frac{(q-a)^2}{2}\Big]=\\
=&\int\limits_{0}^{q-a}\Big[\partial_{\varphi'}\partial_{\varphi}\partial_\mu^l[\breve{R}_\mu(x,y)-\breve{R}_\mu(x,\overline{y})]\big|_{\varphi=\varphi'=0}\varrho^{-1}-\mathfrak{C}\partial_\varrho\partial_\mu^l K_0(\sqrt{-\mu}\varrho)\Big]d\varrho,
\end{align*}
where $\breve{R}_\mu$ is the remainder in (\ref{resolvent asymptotics}).  In view of (\ref{resolvent via heat}), (\ref{heat kernel cover}), the formula 
\begin{equation}
\label{McDonald}
K_0(\sqrt{-\mu}\varrho)=\int_0^{+\infty}{\rm exp}\Big(\mu t-\frac{\varrho^2}{4t}\Big)\frac{dt}{2t},
\end{equation}
and the residue theorem, one arrives at
\begin{align*}
\mathfrak{J}^l_\mu=O(&\mu^{-\infty})+\int\limits_0^{1}dt\,t^{l-1}e^{\mu t}\times\\
\times&\Bigg[\frac{1}{8\pi i\beta}\int\limits_{\mathscr{E}_L\cup\mathscr{E}_R}d\vartheta\,\Big[\partial_{\varphi}\partial_{\varphi'}\sum_\pm \pm\Xi(\vartheta,\varphi\mp\varphi')\Big]_{\varphi',\varphi=0}\int\limits_{0}^{q}\Big(e^{-\varkappa}-1\Big)\frac{d\varrho}{\varrho}-\frac{\mathfrak{C}}{2}\int_{0}^{(q-a)^2/4t}e^{-p}dp\Bigg]=\\
&=O(\mu^{-\infty})+\int\limits_0^{+\infty}dt\,t^{l-1}e^{\mu t}\Bigg[\frac{\mathfrak{C}}{2}+\frac{1}{8\pi i\beta}\int\limits_{\mathscr{E}_L\cup\mathscr{E}_R}\int\limits_{0}^{\varkappa(q,\vartheta,t)}\Big(e^{-\varkappa}-1\Big)\frac{d\varkappa}{\varkappa}d\Xi'(\vartheta,0)\Bigg].
\end{align*}
where 
$$\varkappa(\varrho,\vartheta,t):=\frac{\varrho^2{\rm sin}^2(\vartheta/2)}{t}.$$ 
Now we take into account the asymptotics
$$\int\limits_{0}^{p}\frac{1-e^{-\varkappa}}{\varkappa}d\varkappa={\rm log}p+\mathfrak{c}+O(e^{-p/2})\quad p\to+\infty, \text{ where } \mathfrak{c}:=\int\limits_{0}^{1}\frac{1-e^{-\varkappa}}{\varkappa}d\varkappa-\int\limits_{1}^{+\infty}\frac{e^{-\varkappa}}{\varkappa}d\varkappa.$$
Note that $p=\varkappa(q,\vartheta,t)$ tends to infinity uniformly in $\vartheta\in\mathscr{E}_L\cup\mathscr{E}_R$ since $\Re {\rm sin}^2(\vartheta/2)\le c_0>0$ on $\mathscr{E}_L$, $\mathscr{E}_R$. Thus, the above asymptotics implies
\begin{align}
\label{boundedness regular remainder}
\begin{split}
\mathfrak{J}^l_\mu=\int\limits_0^{+\infty}dt\,t^{l-1}e^{\mu t}\Bigg[\frac{\mathfrak{C}(\beta)}{2}-\frac{1}{8\pi i\beta}\int\limits_{\mathscr{E}_L\cup\mathscr{E}_R}\Big(2{\rm log}q-{\rm log}t+\mathfrak{c}+{\rm log}{\rm sin}^2(\vartheta/2)\Big)d\Xi'(\vartheta,0)\Bigg]+O(\mu^{-\infty})=\\
=\int\limits_0^{+\infty}dt\,t^{l-1}e^{\mu t}\Bigg[\frac{\mathfrak{C}}{2}-\frac{\pi}{8i\pi\beta^2}\int\limits_{\mathscr{E}_L\cup\mathscr{E}_R}
\frac{{\rm cot}(\vartheta/2)}{{\rm sin}^2(\pi\beta^{-1}\vartheta)}d\vartheta\Bigg]+O(\mu^{-\infty}).
\end{split}
\end{align}
Therefore, the regularized term $\tilde{\Upsilon}_\mu(a,q)$ is finite if and only if
\begin{align}
\label{corner contribution}
\mathfrak{C}=\mathfrak{C}(\beta):=\frac{2\pi}{8i\pi\beta^2}\int\limits_{\mathscr{E}_L\cup\mathscr{E}_R}
\frac{{\rm cot}(\vartheta/2)}{{\rm sin}^2(\pi\beta^{-1}\vartheta)}d\vartheta=\frac{1}{6\beta}\Big(\frac{\beta}{2\pi}-\frac{2\pi}{\beta}\Big).
\end{align}
Moreover, if (\ref{corner contribution}) holds, then $\tilde{\Upsilon}_\mu(a,q)$ rapidly decreases as $\mu\to-\infty$.

In addition, formulas (\ref{the main part}) and (\ref{McDonald}) yield
\begin{align*}
\Upsilon'_\mu(a,q):=&\frac{1}{2}\int\limits_0^{+\infty}dt\int\limits_0^{q-a}\partial_\varrho{\rm exp}\Big(\mu t-\frac{\varrho^2}{4t}\Big)d\varrho=\\
=&\frac{1}{2}\int\limits_0^{+\infty}dt\,e^{\mu t}\Big({\rm exp}\Big(-\frac{(q-a)^2}{4t}\Big)-1\Big)=\partial_\mu\big[{\rm log}\sqrt{-\mu}+K_0(\sqrt{-\mu}(q-a))\big].
\end{align*}

\subsection*{Completing the derivation of $\delta\zeta_{\Delta_D-\mu}(2+Q)$.} As a result, formula  (\ref{rotation counter terms}) can be rewritten as 
\begin{equation}
\label{rotation counter terms 1}
(1+Q)!\delta\zeta_{\Delta_D-\mu}(2+Q)=\partial_{\mu}^{2+Q}F(\mu), \qquad F(\mu)=\mathfrak{C}(\beta)\Big[{\rm log}\sqrt{-\mu}+K_0(\sqrt{-\mu}(q-a))\Big]+\tilde{\Upsilon}_\mu(a,q)+\Psi_\mu([q,b]).
\end{equation}
In view of (\ref{boundedness regular remainder}), (\ref{corner contribution}), and (\ref{Psy asymp infinity}), the asymptotics
\begin{equation}
\label{high mu asymp rotation 0}
F(\mu)=\mathfrak{C}(\beta){\rm log}\sqrt{-\mu}+\delta|P|\cdot[1+{\rm log}2-(1/2){\rm log}(-\mu)-\gamma]\mu/2\pi+O(\mu^{-\infty})
\end{equation}
is valid and admits the differentiation in $\mu$; in particular
\begin{equation}
\label{high mu asymp rotation}
F''(\mu)=\frac{\delta|P|}{4\pi(-\mu)}-\frac{\mathfrak{C}(\beta)}{2\mu^2}+O(\mu^{-\infty}).
\end{equation}
Let us calculate $F(0)$. We have
\begin{equation}
\label{rotatio F 0}
F(0)=\mathfrak{C}(\beta)\big({\rm log}2-\gamma\big)+\tilde{\Upsilon}_0(a,q)+\big[\Psi_0([q,b])-\mathfrak{C}(\beta){\rm log}(q-a)\big].
\end{equation}
From (\ref{the regularized remainder}), (\ref{Psi 0}) and the asymptotics $-\partial_z K_0(z)=z^{-1}+o(1)$ as $z\to 0$, it follows that 
$$\tilde{\Upsilon}_\mu(a,q):=\int\limits_{a}^{q-a}\Bigg[\frac{\mathfrak{C}(\beta)}{x-a}-\frac{(x-a)\Re\{z,x\}}{6\pi}\Bigg]dx=o(1) \qquad (q\to a).$$
Here the last identity follows from (\ref{Schwarzian derivatives}). Now the limit transition in (\ref{rotatio F 0}) yields
\begin{equation}
\label{rotatio F 0 fin}
\begin{split}
F(0)=\mathfrak{C}(\beta)&\big({\rm log}2-\gamma\big)+\lim_{q\to+0}\big[\Psi_0([q,b])-\mathfrak{C}(\beta){\rm log}(q-a)\big]=\\
=&\mathfrak{C}(\beta)({\rm log}2-\gamma)-\frac{1}{6\pi}\Im\mathscr{H}\int\limits_{\partial P}\{z,x\}(A\cdot\nu)\hat{\nu}(x)dx.
\end{split}
\end{equation}
\subsection*{Variation of $\zeta_{\Delta_D}(s)$: formal calculations.} Formally differentiating (\ref{zeta s via zeta 2 mu}) in $t$, applying (\ref{rotation counter terms 1}), (\ref{high mu asymp rotation}) integrating by parts, one arrives at $(s-1)\delta\zeta_{\Delta_D}(s)=\widehat{F}(s)$, where $\widehat{F}$ is given by (\ref{mellin transfrom of F}). Now Lemma \ref{general lemma} yields $\delta{\rm log}{\rm det}\Delta_{D}=F(\infty)-F(0)$, where $F(\infty)=0$ due to (\ref{high mu asymp rotation}) and $F(0)$ is given by (\ref{rotatio F 0 fin}). The last equality is exactly formula (\ref{Main formula}) for variation (\ref{odd perturbations}).

\section{Justifying formula (\ref{Main formula}) for polygon-preserving deformations}
\label{justy}
Now we give the rigorous proof of (\ref{Main formula}) on the class of polygon-preserving deformations $t\mapsto P_t$ ($t\in[-t_0,t_0]$). By additivity, it is sufficient to consider the linear deformations of the one side (as the corresponding family $P_t$, one can take $P_t=\phi_t(P_0)$, where $\phi_t$ is of the form (\ref{shif plus rot})). Then one can assume that $(A\cdot\nu)(x)=\mathcal{A}+\mathcal{B}x$ on $[x_i,x_{i+1}]=[a(t),b(t)]\subset\mathbb{R}$ and $(A\cdot\nu)=0$ on $\partial P\backslash [a,b]$. Let $\chi_a$, $\chi_b$ be smooth non-negative cut-off functions obeying $\chi_a=1$ near $a$, $\chi_b=1$ near $b$ and $\chi_a+\chi_b=1$ on $[a,b]$. Then $(A\cdot\nu)$ is the sum of the following infinitesimal deformations
$$\chi_a(x)[\mathcal{A}+\mathcal{B}a], \quad \chi_b(x)[\mathcal{A}+\mathcal{B}b], \quad \chi_a(x)\mathcal{B}\cdot(x-a), \quad -\chi_b(x)\mathcal{B}\cdot(b-x)$$
considered in Sections \ref{sec shift} and \ref{sec rot}. Note that the expressions (\ref{variation of zeta 2 mu shift 1}), (\ref{rotation counter terms 1}) for $\delta\zeta_{\Delta_D-\mu}(2+Q)$ (with sufficiently large $Q$) are justified for each of the above deformations. This, by additivity, $\zeta_{\Delta_D-\mu}(2+Q)$ is differentiable in $t$ for all $t\in[-t_0,t_0]$ and sufficiently large $Q=3,\dots$, and 
$$(1+Q)!\delta\zeta_{\Delta_D-\mu}(2+Q)=\partial^{2+Q}_\mu F(\mu)$$
where
\begin{align}
\label{F gen case}
\begin{split}
F(\mu)=\Psi_\mu([q_1,q_2])+[\mathcal{A}+\mathcal{B}a]\Phi_\mu(a,q_1)+[\mathcal{A}+\mathcal{B}b]\Phi_\mu(b,q_2)+\\
+\mathcal{B}\Big(\mathfrak{C}(2\alpha_i)\Big[{\rm log}\sqrt{-\mu}+K_0(\sqrt{-\mu}(q_1-a))\Big]+\tilde{\Upsilon}_\mu(a,q)\Big)-\\
-\mathcal{B}\Big(\mathfrak{C}(2\alpha_{i+1})\Big[{\rm log}\sqrt{-\mu}+K_0(\sqrt{-\mu}(b-q_2))\Big]+\tilde{\Upsilon}_\mu(b,q_2)\Big),
\end{split}
\end{align}
where $\Psi_\mu([q_1,q_2])$ is obtained by replacing the integration contour in (\ref{Psi big 1}) by $[q,b]$. Similarly, $\Phi_\mu(b,q_2)$ is obtained by replacing $\alpha_i$ by $\alpha_{i+1}$ and $\mathcal{C}$ by the curve starting at $q_2$ and ending at $q'_2\in[x_{i+1},x_{i+2}]$ in (\ref{Phi 01}), while $\tilde{\Upsilon}_\mu(b,q_2)$ is obtained by replacing $x-a$ by $q-x$ and the interval $[a,q]$ with $[q_2,b]$ in (\ref{the regularized remainder}).

In view of asymptotics (\ref{Psi big 2 as}), (\ref{asymptotics of Phi}), and (\ref{high mu asymp rotation 0}), (\ref{high mu asymp rotation}), formulas (\ref{area var}), (\ref{perimeter variation}), (\ref{corner contribution}), and the obvious identity $\delta\big(\sum_i\alpha_i\big)=0$, formula (\ref{F gen case}) yields
\begin{equation}
\label{asumptotics large mu general}
\begin{split}
F(\mu)=(-\mu)&{\rm log}(-\mu)\frac{\delta|P|}{4\pi}-\frac{\delta{\bm|}\partial P{\bm|}}{2}\sqrt{-\mu}+2\delta{\bm\beta}_1(P)\cdot{\rm log}\sqrt{-\mu}+\\
+&\Big[(1+{\rm log}2-\gamma)\delta|P|+(q_1+q_2-a-b)\mathcal{B}\Big]\frac{\mu}{2\pi}+O(\mu^{-\infty}) \qquad (\mu\to-\infty),
\end{split}
\end{equation}
where
$${\bm\beta}_1(P)=\sum_{i=1}^n\frac{\pi^2-\alpha_i^2}{24\pi\alpha_i}.$$
In particular,
\begin{equation}
\label{asumptotics large mu general 1}
\partial_{\mu}^{2+Q}F(\mu)=\delta\partial_\mu^Q\mathfrak{F}_{sing}(\mu)+O(\mu^{-\infty}) \qquad (\mu\to-\infty), 
\end{equation} 
for any $Q$, where
\begin{equation}
\label{singular part in mu rep}
\mathfrak{F}_{sing}(\mu):=\frac{|P|}{4\pi(-\mu)}-\frac{{\bm|}\partial P{\bm|}}{8(-\mu)^{3/2}}+\frac{{\bm\beta}_1(P)}{\mu^2}
\end{equation}
Here the crucial moment is that, since the deformations $t\mapsto P_t$ preserve the class of polygons, the asymptotics (\ref{asumptotics large mu general}), (\ref{asumptotics large mu general 1}) are {\it valid and unifrom} for all $t\in[-t_0,t_0]$.

At the same time, according to the results of \cite{Kac,Mckean,Berg}, the heat trace $K(\tau|\Delta_{D}):={\rm Tr}\,e^{-\tau\Delta_{D}}=\sum_k e^{-\lambda_k \tau}$ of $\Delta_D$ admits the asymptotics 
\begin{equation}
\label{heat trace asymp}
K(\tau|\Delta_{D})=\frac{|P|}{4\pi \tau}-\frac{|\partial P|}{8\sqrt{\pi\tau}}+{\bm\beta}_1(P)+O(e^{-{\bm\beta}_2(P)/\tau})\qquad \tau\to +0,
\end{equation}
where ${\bm\beta}_2(P)>0$. Since the zeta function of $\Delta_D-\mu$ is related to the heat trace via
$$\zeta_{\Delta_{D}-\mu}(s)=\frac{1}{\Gamma(s)}\int\limits_{0}^{+\infty}\tau^{s-1}e^{\mu\tau}K(\tau|\Delta_{D})d\tau \qquad (\Re s>1),$$
the decomposition
\begin{equation}
\label{excluding singular part 0}
(1+Q)!\zeta_{\Delta^{(t)}_{D}-\mu}(2+Q)-\partial_\mu^Q\mathfrak{F}_{sing}(\mu)=:\partial_\mu^Q\mathfrak{Z}(\mu)=O(\mu^{-\infty}) \qquad (\mu\to-\infty)
\end{equation}
follows from (\ref{heat trace asymp}) and (\ref{singular part in mu rep}). Thus, formula (\ref{zeta s via zeta 2 mu}) can be rewritten as
\begin{equation}
\label{zeta s decomposition}
(s-1)\dots(s-1-Q)\zeta_{\Delta}(s)-\frac{1}{2\pi i}\int\limits_\Gamma\partial_\mu^Q\mathfrak{F}_{sing}(\mu)\mu^{1+Q-s}d\mu=\frac{1}{2\pi i}\int\limits_\Gamma\partial_\mu^Q\mathfrak{Z}(\mu)\mu^{1+Q-s}d\mu\qquad (\Re s>1).
\end{equation} 
Due to (\ref{excluding singular part 0}), the integral in the right-hand side of (\ref{zeta s decomposition}) is well-defined for any $s\in\mathbb{C}$, {\it no analytic continuation in needed here}. Thus, formula (\ref{zeta s decomposition}) is valid (for the analytic continuation of $\zeta_{\Delta}(s)$) for any $s\in\mathbb{C}$. Moreover, since the functions $\partial_\mu^Q\mathfrak{Z}$ and $\delta\partial_\mu^Q\mathfrak{Z}$ decrease as $\Re\mu\to-\infty$ rapidly and {\it uniformly} in $t\in[-t_0,t_0]$ due to (\ref{excluding singular part 0}) and (\ref{asumptotics large mu general 1}), one can apply, for any $s\in\mathbb{C}$, the Leibniz integral rule to the integral in the right-hand side of (\ref{zeta s decomposition}), obtaining
\begin{equation}
\label{zeta s decomposition 1}
\delta\Bigg[(s-1)\dots(s-1-Q)\zeta_{\Delta}(s)-\frac{1}{2\pi i}\int\limits_\Gamma\partial_\mu^Q\mathfrak{F}_{sing}(\mu)\mu^{1+Q-s}d\mu\Bigg]=\frac{1}{2\pi i}\int\limits_\Gamma\delta\partial_\mu^Q\mathfrak{Z}(\mu)\mu^{1+Q-s}d\mu\qquad (s\in\mathbb{C}).
\end{equation} 
Here $\zeta_{\Delta}(s)$ is the meromorphic continuation of zeta function of $\Delta$ to the whole complex plane. Due to explicit formula (\ref{singular part in mu rep}), it is easily checked that the meromorphic continuation of the integral in the left-hand side of (\ref{zeta s decomposition 1}) commutes with it differentiation in $t$,
$$\delta\int\limits_\Gamma\partial_\mu^Q\mathfrak{F}_{sing}(\mu)\mu^{1+Q-s}d\mu=\int\limits_\Gamma\delta\partial_\mu^Q\mathfrak{F}_{sing}(\mu)\mu^{1+Q-s}d\mu$$
Transferring this integral to the right-hand side of (\ref{zeta s decomposition 1}), one justifies (for any $s\in\mathbb{C}$) the formula
\begin{equation}
\label{zeta s via zeta 2 mu just}
(s-1)\dots(s-1-Q)\delta\zeta_{\Delta}(s)=\frac{(Q+1)!}{2\pi i}\int\limits_\Gamma\partial^{2+Q}_\mu F(\mu)\mu^{1+Q-s}d\mu=\frac{(Q+1)!}{2\pi i}\int\limits_\Gamma\delta\zeta_{\Delta-\mu}(2+Q)\mu^{1+Q-s}d\mu,
\end{equation} 
obtained by formal differentiation of (\ref{zeta s via zeta 2 mu}) in $t$. 
\begin{rem}
The above justification of the commutation of the differentiation with respect to the parameter $t$ and the analytic continuation in $s$ requires the asymptotics of $\delta\zeta_{\Delta_D-\mu}(2+Q)$ which are uniform in $t\in[-t_0,t_0]$. Since we derive such an asymptotics {\rm(}by the use of the double cover $X$ of $P$ and the parametrix of the heat kernel on it{\rm)} only for the polygons, to this end we need to restrict ourselves to considering the polygon-preserving deformations only. This is the reason why we have not completed the justification of {\rm(\ref{Main formula})} for the local deformations considered in Sections \ref{sec bumping}-\ref{sec rot} and did not consider the general case in which the variations $(A,\nu)$ are smooth on the each side of $P$. In principle, this can be done by implementing the above scheme but it needs the near-diagonal and high-frequency asymptotics of the resolvent kernel which are valid if $P$ is curvilinear polygon. 
\end{rem}
Now, integrating by parts in (\ref{zeta s via zeta 2 mu just}) and applying asymptotics (\ref{asumptotics large mu general 1}), one obtains 
$$(s-1)\delta\zeta_{\Delta}(s)=\frac{1}{2\pi i}\int\limits_\Gamma\partial^{2}_\mu F(\mu)\mu^{1-s}d\mu=:\widehat{F}(s).$$
Applying Lemma \ref{general lemma}, one obtains $\delta{\rm log}{\rm det}\Delta_{D}=F(\infty)-F(0)$, where $F(\infty)=0$ is the constant term in asymptotics (\ref{asumptotics large mu general 1}). In view of (\ref{Psi 0}), (\ref{Phi 01}), (\ref{Schwarzian derivatives}), (\ref{rotatio F 0 fin}), and (\ref{F gen case}), one has
$$F(0)=\frac{{\rm log}2-\gamma}{12}\sum_{i=1}^n\Big(\frac{\pi}{\alpha_i}-\frac{\alpha_i}{\pi}\Big)\,\frac{\delta\alpha_i}{\alpha_i}-\frac{1}{6\pi}\Im\mathscr{H}\int\limits_{\partial P}\{z,x\}(A\cdot\nu)\hat{\nu}(x)dx.$$
Thus, formula (\ref{Main formula}) is proved.

\section{Appendix: on Wiegmann-Zabrodin formula} 
Let $\Omega$ be a simply connected domain in ${\mathbb C}$ with boundary $\Gamma$;  we assume that
the Riemann biholomorphic map $R_\Omega: w\mapsto z(w)$ from the open unit disk $U=\{|w|<1\}$ to $\Omega$ has a biholomorphic extension to a vicinity of $S^1=\partial U$, in particular the boundary $\Gamma$ is  analytic.

Let $\Delta_\Omega$ be the operator of the Dirichlet boundary value problem in $\Omega$ for the (positive) Laplacian $-4\partial_z\partial_{\bar z}$ and let
${\rm det}\, \Delta_\Omega$ be the $\zeta$-regularized determinant of this operator. 
Let $V$ be a function holomorphic in a vicinity of the unit disk $U$; define a perturbation, $\Omega_\epsilon$, of the domain $\Omega$ as
\[\Omega_\epsilon=(R_\Omega+\epsilon  V)(U).\] 
with small (real) $\epsilon$. Note that the map $R_\Omega+\epsilon  V: \ U\mapsto \Omega_\epsilon$ is biholomorphic for sufficiently small $\epsilon$ due to Theorem 2.4, \cite{Pascu}.

Then the following theorem holds true
\begin{theorem}
One has the following variational formula for the determinant of the Dirichlet boundary problem
\begin{equation}\label{WZ}
\frac{d \log {\rm det}\,\Delta_{\Omega_\epsilon}}{d \epsilon}\Big|_{\epsilon=0}=
\frac{1}{6\pi}\Re\bigg[\int_\Gamma V(R_\Omega^{-1}(z)) \overline{ \nu(z)}\left(\Re(\nu^2 \{R_\Omega^{-1}(z), z\})-k^2(z)\right)|dz|\bigg]\,
\end{equation}
where
$\nu(z)=|w'|w(w')^{-1}$  (with $w(z):=R^{-1}_\Omega(z)$) is the unit outward normal to $\Gamma$ at $z$, 
 $k(z)$ is the curvature of $\Gamma$ at $z$ and $\{\cdot, \cdot\}$ denotes the Schwarzian derivative.
\end{theorem}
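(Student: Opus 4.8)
The plan is to follow the route indicated in the introduction, namely to deduce (\ref{WZ}) from the smooth (Alvarez/Polyakov) conformal variation formula, using the fact that a boundary deformation can be traded for a conformal rescaling of the metric on the \emph{fixed} disk $U$. First I would use the Riemann maps $R_{\Omega_\epsilon}=R_\Omega+\epsilon V$ to realize each $(\Omega_\epsilon,|dz|^2)$ as an isometric copy of $U$ equipped with the conformal metric $e^{2\phi_\epsilon}|dw|^2$, where $\phi_\epsilon=\log|R_\Omega'+\epsilon V'|$. Since a conformal map is an isometry, the spectra coincide and $\det\Delta_{\Omega_\epsilon}=\det\Delta_{U,e^{2\phi_\epsilon}}$ \emph{exactly}, with no anomaly at this step. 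Hence the $\epsilon$--derivative of $\log\det$ becomes the derivative of a determinant on the fixed disk under the conformal variation $\delta\phi:=\frac{d}{d\epsilon}\phi_\epsilon\big|_{\epsilon=0}=\Re(V'/R_\Omega')$. The key structural observation is that $\delta\phi=\Re h'$ with $h(z):=V(R_\Omega^{-1}(z))$ holomorphic near $\Gamma$; in particular $\delta\phi$ is harmonic and $h$ is precisely the quantity $V(R_\Omega^{-1}(z))$ appearing in (\ref{WZ}).

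Next I would apply the infinitesimal Polyakov--Alvarez formula for the Dirichlet Laplacian to the family $e^{2\phi_\epsilon}|dw|^2$. Relative to the base metric $g_0=e^{2\phi_0}|dw|^2$, which is isometric to the flat metric $|dz|^2$ on $\Omega$ and hence has vanishing Gaussian curvature in the interior, the conformal factor is $\sigma_\epsilon=\phi_\epsilon-\phi_0$ with $\sigma_0=0$ and $\dot\sigma_0=\delta\phi$. Because the bulk curvature of $g_0$ vanishes, the linearization of the Alvarez functional at $\sigma=0$ (the quadratic Liouville term has zero first variation there, and the interior curvature term drops out) retains only boundary contributions, so that
\begin{equation}
\label{alvarez var}
\frac{d}{d\epsilon}\log\det\Delta_{\Omega_\epsilon}\Big|_{\epsilon=0}=\int_\Gamma\big(c_1\,k(z)\,\delta\phi(z)+c_2\,\partial_\nu\delta\phi(z)\big)\,|dz|,
\end{equation}
where $k$ is the geodesic curvature of $\Gamma$ in $g_0$ (i.e.\ the Euclidean curvature of $\Gamma$), $\partial_\nu$ is the outward normal derivative, and $c_1,c_2$ are universal constants fixed by the standard Dirichlet heat--trace coefficients, normalized so that the overall prefactor comes out to $1/(6\pi)$. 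Using harmonicity of $\delta\phi=\Re h'$ one has the Cauchy--Riemann identity $\partial_\nu\delta\phi=\Re(\nu h'')$, which I would substitute into (\ref{alvarez var}).

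The remaining and principal task is to convert $\int_\Gamma\big(c_1 k\,\Re h'+c_2\Re(\nu h'')\big)|dz|$ into the right-hand side of (\ref{WZ}), which is expressed through $h$ itself (not its derivatives), the Schwarzian $\{R_\Omega^{-1}(z),z\}$, and $k^2$. I would integrate by parts twice along the closed analytic curve $\Gamma$ to move both derivatives off $h'$ and $h''$ onto the geometric coefficients. Differentiating the unit normal $\nu$ and the arc-length element via the Frenet relations produces the curvature ($k^2$ arising from differentiating $\nu$ twice), while the combination $R_\Omega'''/R_\Omega'-\tfrac32(R_\Omega''/R_\Omega')^2$ generated by the second derivative of $\phi_0=\log|R_\Omega'|$ reassembles, after the change of variables $w=R_\Omega^{-1}(z)$ and the transformation law (\ref{proj connection}) of the Schwarzian, into $\Re\!\big(\nu^2\{R_\Omega^{-1}(z),z\}\big)$. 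Collecting all contributions should turn the boundary density into $\overline{\nu}\,h\big(\Re(\nu^2\{R_\Omega^{-1}(z),z\})-k^2\big)$ up to the factor $1/(6\pi)$, i.e.\ exactly (\ref{WZ}).

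I expect the main obstacle to be precisely this last step: the double integration by parts on $\Gamma$, combined with the Frenet formulas and the curvature/Schwarzian identities for the conformal map, must conspire to produce the combination $\Re(\nu^2\{R_\Omega^{-1},z\})-k^2$ with the correct normalization and with the derivatives fully transferred onto $h$. This is the \emph{non-obvious} content of the Wiegmann--Zabrodin derivation referred to in the introduction. A conceptual cross-check, and an alternative derivation in the spirit of the main text, is to recompute $\psi_0$ as in Section \ref{sec bumping} for the smooth domain: there the reflection $y^\dagger$ across the \emph{curved} boundary $\Gamma$ is the Schwarz reflection rather than ordinary conjugation, and its deviation from $\overline{y}$ is governed by the curvature of $\Gamma$. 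The term $\partial_{\nu(x)}\partial_{\nu(y)}\log|x-y^\dagger|\big|_{y=x}$, which vanishes on the flat sides of a polygon, now contributes exactly the $-k^2$ piece absent from the pure-Schwarzian form $\tfrac{1}{6\pi}\Im\int_\Gamma\{z,x\}(A\cdot\nu)\hat\nu(x)\,dx$ of (\ref{Main formula}); verifying that this curvature contribution matches the $-k^2$ term of (\ref{WZ}) would simultaneously pin down the constants $c_1,c_2$ and confirm the final identity.
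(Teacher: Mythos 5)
Your proposal follows essentially the same route as the paper's proof: the paper likewise starts from the Alvarez--OPS formula $-12\pi\log{\rm det}\,\Delta_{\Omega_\epsilon}=C+\int_{S^1}\phi\,\partial_r\phi\,dt+2\int_{S^1}\phi\,dt$ with $\phi=\log|z'+\epsilon V'|$ — whose linearization at the flat pullback metric is exactly your boundary-only variational formula, since both $\phi_0=\log|z'|$ and $\delta\phi=\Re(V'/z')$ are harmonic — and then carries out precisely the double integration by parts along the boundary that transfers the derivatives from $V$ onto the geometric coefficients and reassembles $\Re(\nu^2\{R_\Omega^{-1}(z),z\})-k^2$. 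The only differences are presentational: the paper computes on $S^1$ in the $w$-coordinate via Cauchy-theorem manipulations rather than on $\Gamma$ via Frenet formulas, and your normal-derivative term $\int_\Gamma\partial_\nu\delta\phi\,|dz|$ in fact vanishes identically by harmonicity of $\delta\phi$, so only the $k\,\delta\phi$ contribution survives, exactly as in the paper's expansion.
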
 

 \begin{rem} As a matter of fact, formula (\ref{WZ}) should be completely attributed to Wiegmann and Zabrodin. In (4.29) from \cite{WZ} they stated the variational formula for the determinant of the Dirichlet boundary problem under normal variations of the boundary. In the special case $V(w(z))=w(z)(w'(z))^{-1}$ the variation from (\ref{WZ}) is normal and (\ref{WZ}) coincides with expression given in \cite{WZ}. 

The only available  cross-check, using the known expression, ${\det}\Delta_r=Cr^{-1/3}$, (see
\cite{Weisb}) 
for the determinant of the Dirichlet boundary problem in the disk of radius $r$ agrees both with (\ref{WZ}) and (4.29) from \cite{WZ}.  

Unfortunately, Wiegmann and Zabrodin do not give a proof of their wonderful formula referring to "a direct calculation". As the reader will see even in our (in a sort, much simpler) case the direct calculation we use to prove (\ref{WZ}) is rather cumbersome.  

It should be, of course, acknowledged that without Wiegman-Zabrodin's insight it would be non-realistic to identify the $k^2$-term in the very long intermediate expression we get as a result of our calculation.  
\end{rem}   

{\bf Proof.}  First we collect some useful formulas that will be applied in the sequel. In what follows we surpress the inverse Riemann map in all the expressions. Say, $w$  always denotes either the standard coordinate in ${\mathbb C}\supset U$  or  a function $w=R^{-1}(z)$,  $z\in {\mathbb C}\supset \Omega$; in the latter case $w'$ denotes the complex derivative w. r. t. to $z$. Similarly $z$ is either a coordinate in ${\mathbb C}\supset \Omega$ or a function of $w\in {\mathbb C}\supset U$ defined by $z(w)=R(w)$; in the latter case $z'$ denotes the complex derivative w. r. t. $w$.  In particular, using these agreements,  we have  $z'=(w')^{-1}$. 
  
Let $w=e^{it}$ be the parametrization of $S^1=\partial U$. Then one has
 \begin{equation}\label{dt} dt=\frac{dw}{iw} \end{equation}
and
\[\overline{dw}=-\frac{dw}{w^2}\]
on $S^1$.
One has
\begin{equation}\label{dz} |dz|=\frac{dt}{|w'|}
\end{equation}
on $\Gamma$. 
The curvature $k$ of $\Gamma$ can be computed via the formula
\[k=|w'|\Re\bigg\{1+w\frac{z''}{z'}\bigg\}\]
and the unit (outward) normal to $\Gamma$ is given by
\[\nu=\frac{|w'|\,w}{w'}.\]

Relations $\nu^{-1}=\bar \nu$ and $w^{-1}=\bar w$ (on $\Gamma$) will be often used in the sequel. Let us also remind the reader the standard property
\begin{equation}\label{Sch}\{z, w\}(dw)^2=-\{w, z\}(dz)^2\end{equation} 
of the Schwarzian derivative. 

We start with the following version of the Alvarez formula (see, e. g., \cite{OPS}, eq-n (1.4)) for $\log {\rm det}\,\Delta_{\Omega_\epsilon}$:
\begin{equation}\label{Alvarez}
-12\pi\log {\rm det}\,\Delta_{\Omega_\epsilon}=C+\int_{S^1}\phi\partial_r\phi\,dt+2\int_{S^1}\phi\,dt,
\end{equation}
where $C$ is an absolute constant and
\begin{equation*}
\phi=\log|z'(w)+\epsilon V'(w)|=\Re \log z'(w)+\epsilon \Re \frac{V'}{z'}+O(\epsilon^2)\,.
\end{equation*}
One has
\begin{equation*}
\partial_r\phi=\partial_r\log|z'(re^{it})+\epsilon V'(re^{it}|=
\Re\bigg(\frac{wz''}{z'}\bigg)+\epsilon\Re\bigg\{\frac{wV''}{z'}-\frac{wz''V'}{z'^2}\bigg\}+O(\epsilon^2)
\end{equation*}
Thus, the coefficient near $\epsilon$ in the asymptotical expansion of the right hand side of (\ref{Alvarez}) reads as
\[\int_{S^1}\bigg[\Re\bigg(\frac{V'}{z'}\bigg)\Re\bigg(\frac{wz''}{z'}\bigg)+\Re\bigg\{\frac{wV''}{z'}-\frac{wz''V'}{(z')^2}\bigg\}\Re\log z'(w)]\bigg]\,dt+
2\Re\int_{S^1}\frac{V'}{z'}\,dt\]

Moving $\Re$ from factors containing $V, V'$ and $V''$ out of the integrals and then applying Cauchy formula to replace the real parts of holomorphic functions by the conjugate holomorphic, and  making use of $\bar w=w^{-1}$ and (\ref{dt}), one rewrites this as
\begin{align*}
-24\pi\frac{d \log {\rm det}\,\Delta_{\Omega_\epsilon}}{d \epsilon}\Big|_{\epsilon=0}=\\
=\Re\frac{1}{i}\Bigg[ \int_{S^1}V'\frac{1}{w^2z'}\overline{\bigg(\frac{z''}{z'}\bigg)}\,dw +\int_{S^1}V''\frac{1}{z'}\overline{\log z'}\,dw -
\int_{S^1}V' \frac{z''}{(z')^2} \overline{\log z'}\,dw+\\
+4\int_{S^1}V'\frac{1}{z'w}\,dw\Bigg]=\Re\frac{1}{i}[A+B-C+4D].
\end{align*}
Notice that $\overline{ dw}=-dw/w^2$ and, therefore, we have the following integration by parts formulas for three holomorphic functions $V$, $f$ and $g$:
\[\int_{S^1}V'f\bar g \,dw=-\int_{S^1}V f' \bar g\, dw+\int_{S^1}V\bigg(\frac{f}{w^2}\bigg)\overline{ g'}\,dw\]
and
\[\int_{S^1}V''f\bar g\,dw= \int_{S^1}V f'' \bar g\,dw - \int_{S^1}V\frac{f}{w^2}\overline{g'}\,dw -\int_{S^1}V\bigg(\frac{f}{w^2} \bigg)'\overline{g'}\,dw+ 
\int_{S^1}V\frac{f}{w^4}\overline{g''}\,dw\]
This gives
\[A=\int_{S^1}V\bigg(\frac{2}{w^3z'}+\frac{z''}{w^2(z')^2}\bigg)\overline{\bigg(\frac{z''}{z'}\bigg)}\,dw +\int_{S^1}V\frac{1}{w^4z'}\overline{(\bigg(\{z, w\}+\frac{1}{2}\bigg(
\frac{z''}{z'}\bigg)^2                           \bigg)    }\,dw,\]
\[B=\int_{S^1}V\bigg(\frac{1}{z'}\bigg)''\overline{\log z'}\,dw+\int_{S^1}V\frac{z''}{w^2(z')^2}\overline{\bigg(\frac{z''}{z'}\bigg) }dw+\]
\[+\int_{S^1}V\bigg(  \frac{z''}{(z')^2w^2} +\frac{2}{w^3z'}\bigg)\overline{\bigg( \frac{z''}{z'}          \bigg)     }dw+\int_{S^1}V\frac{1}{z'w^4}\overline{\bigg(  \{z, w\}+\frac{1}{2}\bigg(\frac{z''}{z'}    \bigg)^2                            \bigg)}dw,\] 
\[-C=\int_{S^1}V\bigg(\frac{z''}{(z')^2}\bigg)'\overline{\log z' }dw -\int_{S^1}V\frac{z''}{(z')^2w^2}\overline{\bigg(\frac{z''}{z'} \bigg)}dw
\]
and
\[4D=\int_{S^1}V\bigg(\frac{4z''}{w(z')^2}+\frac{4}{w^2z'}\bigg)dw\]
Summing all this up, with some effort, one arrives at
\[-24\pi\frac{d \log {\rm det}\,\Delta_{\Omega_\epsilon}}{d \epsilon}\Big|_{\epsilon=0}=\Re\frac{1}{i}\int_{S^1}V\bigg(\frac{2}{w^4z'}\overline{\{z, w\}}+\frac{1}{w^4z'}\overline{(\frac{z''}{z'}\bigg)^2}+$$ $$+\bigg(\frac{4}{w^3z'}+ \frac{2z''}{w^2(z')^2}\bigg)
\overline{\bigg(\frac{z''}{z'}\bigg)}+\frac{4z''}{w(z')^2}+\frac{4}{w^2z'}
\bigg)dw=\]
\[=\Re\frac{1}{i}\int_{S^1}\frac{V}{w^2z'}\bigg\{2\bar w^2\overline{\{z, w\}}+\Bigg[w^2\bigg(\frac{z''}{z'}\bigg)^2\Bigg]+\bar w^2\overline{\bigg(\frac{z''}{z'}\bigg)^2}+4\bar w
\overline{\bigg(\frac{z''}{z'}\bigg)}+$$$$+4w\frac{z''}{z'}+4+2\frac{z''}{z'}\overline{\bigg(\frac{z''}{z'}\bigg) }          
\bigg\}dw,\]
where the term in the big square brackets is inserted by force (it integrates to zero by the Cauchy theorem).
It remains to notice that
the term $2\bar w^2\overline{\{z, w\}}$ in the last expression can be replaced by $4\Re (w^2{\{z, w\}})$ due to the Cauchy theorem,  find out that, amazingly,  
\[\frac{4k^2}{|w'|^2}=4+w^2\bigg(\frac{z''}{z'}\bigg)^2+(\bar w)^2\overline{\bigg(\frac{z''}{z'}\bigg)^2}+4w\frac{z''}{z'}+4\bar w\overline{\bigg(\frac{z''}{z'}\bigg)}+
2\frac{z''}{z'}\overline{\bigg(\frac{z''}{z'}\bigg)},\]
pass from the integration over $S^1$ to the integration over $\Gamma$, make use of (\ref{Sch}), (\ref{dt}), (\ref{dz}) and 
the relation 
\[|dz|=\frac{dw}{iw|w'|},\]
and, finally, get (\ref{WZ}).

\subsection*{Acknowledgements}
The first author thanks Max Planck Institute for Mathematics in Bonn for warm hospitality and excellent working conditions.

\subsection*{Funding}
The research of the second author was supported by  Fonds de recherche du Qu\'ebec.

\end{document}